\newtheorem{prop}{Proposition}[section]
\newtheorem{rmq}{Remark}[section]  
\newtheorem{theo}{Theorem}[section]  
\newtheorem{lem}{Lemma}[section]
\newtheorem{cor}{Corollary}[section]
\newcommand*{\scal}[1]{\left\langle #1 \right\rangle} 
\DeclareMathOperator{\var}{Var}
\begin{document}
\title{Estimating the geometric median in Hilbert spaces with stochastic gradient algorithms: $L^{p}$ and almost sure rates of convergence}
\author{Antoine Godichon \\ Institut de Math\'ematiques de Bourgogne, Universit\'e de Bourgogne, \\
9 Rue Alain Savary, 21078 Dijon, France \\
email: antoine.godichon@u-bourgogne.fr
} 
\maketitle
\begin{abstract}
The geometric median, also called $L^{1}$-median, is often used in robust statistics. Moreover, it is more and more usual to deal with large samples taking values in high dimensional spaces. In this context, a fast recursive estimator has been  introduced by \cite{HC}. This work aims at studying  more precisely the asymptotic behavior of the estimators of the geometric median based on such non linear stochastic gradient algorithms.  The $L^{p}$ rates of convergence as well as almost sure rates of convergence of these estimators are derived in general separable Hilbert spaces. Moreover, the optimal rates of convergence in quadratic mean of the averaged algorithm are also given.
\end{abstract}

\noindent \textbf{Keywords} : Functional Data Analysis, Law of Large Numbers, Martingales in Hilbert space, Recursive Estimation, Robust Statistics,  Spatial Median, Stochastic Gradient Algorithms.

\section{Introduction}

The geometric median, also called $L^{1}$-median, is a generalization of the real median introduced by \cite{Hal48}. In the multivariate case, it is closely related to the Fermat-Webber's problem (see \cite{weber1929alfred}), which consists in finding a point minimizing the sum of distances from given points. This is a well known convex optimization problem. The literature is very wide on the estimation of the solution of this problem. One of the most usual method is to use Weiszfeld's algorithm (see \cite{kuhn1973note}), or more recently, to use the algorithm proposed by \cite{beck2013weiszfeld}.

In the more general context of Banach spaces, \cite{Kem87} gives many properties on the median, such as its existence, its uniqueness, and maybe the most important, its robustness. Because of this last property, the median is often used in robust statistics. For example, \cite{minsker2013geometric} considers it in order to get much tighter concentration bounds for aggregation of estimators. \cite{CCM10} propose a recursive algorithm using the median for clustering, which is few sensitive to outliers than the $k$-means. One can also see \cite{chakraborty2014spatial}, \cite{cuevas2014partial}, \cite{BBTW2011} or \cite{Ger08} among others for other examples.

In this context, several estimators of the median are proposed in the literature. In the multivariate case, one of the most usual method is to consider the Fermat-Webber's problem generated by the sample, and to solve it using Weiszfeld's algorithm (see \cite{VZ00} and \cite{MNO2010} for example). This method is fast, but can encounter many difficulties when we deal with a large sample taking values in relatively high dimensional spaces. Indeed, since it requires to store all the data, it can be difficult or impossible to perform the algorithm.

Dealing with high dimensional of functional data is more and more usual. There exists a large recent literature on functional data analysis (see \cite{bongiorno2014contributions}, \cite{ferraty2006nonparametric} or \cite{silverman2005functional} for example), but few of them speak about robustness (see \cite{Cad01} and \cite{cuevas2014partial}).

In this large sample and high dimensional context, recursive algorithms have been introduced by \cite{HC}; a stochastic gradient algorithm, or Robbins-Monro algorithm (see \cite{robbins1951stochastic}, \cite{bartoli}, \cite{Duf97}, \cite{benveniste-book90}, \cite{kushner2003stochastic} among others), and its averaged version (see \cite{PolyakJud92}). It enables us to estimate the median in Hilbert spaces, whose dimension is not necessarily finite, such as functional spaces. The advantage of these algorithms is that they can treat all the data, can be simply updated, and do not require too much computational efforts. Moreover, it has been proven in \cite{HC} that the averaged version and the estimator proposed by \cite{VZ00} have the same asymptotic distribution. Other properties were given, such as the strong consistency of these algorithms. Moreover, the optimal rate of convergence in quadratic mean of the Robbins-monro algorithm as well as non asymptotic confidence balls for both algorithms are given in \cite{CCG2015}.

The aim of this work is to give new asymptotic convergence properties in order to have a deeper knowledge of the asymptotic behaviour of these algorithms. Optimal $L^{p}$ rates of convergence for the Robbins-Monro algorithm are given. This enables, in a first time, to get the optimal rate of convergence in quadratic mean of the averaged algorithm. In a second time, it enables us to get the $L^{p}$ rates of convergence. In a third time, thanks to these results, applying Borel-Cantelli's Lemma, we give an almost sure rate of convergence of the Robbins-Monro algorithm. Finally, applying a law of large numbers for martingales (see \cite{Duf97} for example), we give an almost sure rate of convergence of the averaged algorithm.

The paper is organized as follows. In Section \ref{sectiondefi}, we recall the definition of the median and some important convexity properties. The Robbins-Monro algorithm and its averaged version are defined in Section \ref{sectionalgo}. After recalling the rate of convergence in quadratic mean of the Robbins-Monro algorithm given by \cite{CCG2015}, we give the $L^{p}$-rates of convergence of the stochastic gradient algorithm as well as the optimal rate of convergence in quadratic mean of the averaged algorithm in Section \ref{sectionlp}. Finally, almost sure rates of convergence of the algorithms are given in Section \ref{sectionas}. The lemma that help understanding the structure of the proofs are given all along the text, but the proofs are postponed in an Appendix.

\section{Definitions and convexity properties}\label{sectiondefi}
Let $H$ be a separable Hilbert space, we denote by $\langle .,.  \rangle$ its inner product and by $\| . \| $ the associated norm. Let $X$ be a random variable taking values in $H$, the geometric median $m$ of $X$ is defined by
\begin{equation}
\label{defi}m:= \arg \min_{h\in H} \mathbb{E}\left[ \left\| X-h \right\| - \left\| X \right\| \right] .
\end{equation}
We suppose from now that the following assumptions are fulfilled:
\begin{itemize}
\item[\textbf{(A1)}] $X$ is not concentrated on a straight line: for all $h \in H$, there is $h' \in H$ such that $\left\langle h,h' \right\rangle =0$ and $ \var \left( \left\langle X,h' \right\rangle \right) > 0$.

\item[\textbf{(A2)}] $X$ is not concentrated around single points: there is a positive constant $C$ such that for all $h \in H$, 
\begin{align*}
& \mathbb{E}\left[ \frac{1}{\left\| X-h \right\|}\right] \leq C, & \mathbb{E}\left[ \frac{1}{\left\| X-h \right\|^{2}}\right] \leq C.
\end{align*}
\end{itemize}
Remark that since $\mathbb{E}\left[ \frac{1}{\left\| X-h \right\|^{2}}\right] \leq C$, as a particular case, $\mathbb{E}\left[ \frac{1}{\left\| X - h \right\|}\right] \leq \sqrt{C}$. Note that for the sake of simplicity, even if it means supposing $C \geq 1$, we take $C$ instead of $\sqrt{C}$. Assumption~\textbf{(A1)} ensures that the median $m$ is uniquely defined \citep{Kem87}. Assumption \textbf{(A2)} is not restrictive whenever $d \geq 3$, where $d$ is the dimension of $H$, not necessarily finite (see \cite{HC} and \cite{Chaud92} for more details). Note that many convergence results can be found without Assumption \textbf{(A2)} if we deal with data taking values in compact sets (see \cite{ADPY10} or \cite{yang2010riemannian} for example).

Let $G$ be the function we would like to minimize. It is defined for all $h \in H$ by
\begin{center}
$ G(h) := \mathbb{E}\left[ \left\| X-h \right\| - \left\| X \right\| \right] .$
\end{center} 
This function is convex and many convexity properties are given in \cite{Chaud92}, \cite{Ger08}, \cite{HC} and \cite{CCG2015}. We recall two important ones:
\begin{itemize}
\item[\textbf{(P1)}] $G$ is Fréchet-differentiable and its gradient is given for all $h \in H$ by
\[\Phi (h) := \nabla_{h}G = -\mathbb{E}\left[ \frac{X -h }{\left\| X - h \right\|}\right]. \]
The median $m$ is the unique zero of $\Phi$.
\item[\textbf{(P2)}] $G$ is twice differentiable and for all $h \in H$, $\Gamma_{h}$ stands for the Hessian of $G$ at $h$. Thus, $H$ admits an orthonormal basis composed of eigenvectors of $\Gamma_{h}$, and let $\left( \lambda_{i,h}\right)$ be the eigenvalues of $\Gamma_{h}$, we have
$0 \leq \lambda_{i,h} \leq C. $\\
Moreover, for all positive constant $A$, there is a positive constant $c_{A}$ such that for all $h \in \mathcal{B}\left( 0 , A \right)$, $c_{A} \leq \lambda_{i,h} \leq C$.\\
As a particular case, let $\lambda_{\min}$ be the smallest eigenvalue of $\Gamma_{m}$, there is a positive constant $c_{m}$ such that $ 0< c_{m} < \lambda_{\min} \leq C$.
\end{itemize}

\section{The algorithms}\label{sectionalgo}
Let $X_{1},....,X_{n},...$ be independent random variables with the same law as $X$. We recall the algorithm for estimation of the geometric median introduced by \cite{HC}, defined as follows:
\begin{equation}
\label{algorm} Z_{n+1}  = Z_{n} + \gamma_{n} \frac{X_{n+1} - Z_{n}}{\left\| X_{n+1} - Z_{n} \right\|},
\end{equation}
where the initialization $Z_{1}$ is chosen bounded ($Z_{1}=X_{1}\mathbb{1}_{\left\lbrace\left\| X_{1} \right\| \leq M\right\rbrace}$ for example) or deterministic. The sequence $(\gamma_n)$ of steps is positive and verifies the following usual conditions
\begin{align*}
& \sum_{n\geq 1} \gamma_{n} = \infty , & \sum_{n \geq 1} \gamma_{n}^{2} < \infty . 
\end{align*}
The averaged version of the algorithm (see \cite{PolyakJud92}, \cite{HC}) is given iteratively by
\begin{equation}
\label{algom} \overline{Z}_{n+1} = \overline{Z}_{n} - \frac{1}{n+1}\left( \overline{Z}_{n} - Z_{n+1} \right),
\end{equation}
where $\overline{Z}_{1}=Z_{1}$. This can be written as $\overline{Z}_{n} = \frac{1}{n}\sum_{k=1}^{n} Z_{k}$.
\newline

The algorithm defined by (\ref{algorm}) is a stochastic gradient or Robbins-Monro algorithm. Indeed, it can be written as follows:
\begin{equation}
\label{decxi} Z_{n+1} = Z_{n} - \gamma_{n} \Phi (Z_{n}) + \gamma_{n}\xi_{n+1},
\end{equation}
where $\xi_{n+1}:= \Phi (Z_{n} ) +\frac{X_{n+1} - Z_{n}}{\left\| X_{n+1} - Z_{n} \right\|}$. Let $\mathcal{F}_{n}$ be the $\sigma$-algebra defined by $\mathcal{F}_{n} := \sigma \left( X_{1},...,X_{n} \right) = \sigma \left( Z_{1},...,Z_{n} \right)$. Thus, $\left( \xi_{n}\right)$ is a sequence of martingale differences adapted to the filtration $\left( \mathcal{F}_{n} \right)$. Indeed, for all $n\geq 1$, we have almost surely $\mathbb{E}\left[ \xi_{n+1} | \mathcal{F}_{n} \right] = 0$. Linearizing the gradient,
\begin{equation}
\label{decdelta} Z_{n+1} - m = \left( I_{H} - \gamma_{n}\Gamma_{m} \right) \left( Z_{n} - m \right) + \gamma_{n}\xi_{n+1} - \gamma_{n}\delta_{n} ,
\end{equation}
with $\delta_{n} := \Phi ( Z_{n} ) - \Gamma_{m}\left( Z_{n} - m \right)$. Note that there is a positive deterministic constant $C_{m}$ such that for all $n \geq 1$ (see \cite{CCG2015}), almost surely,
\begin{equation}
\left\| \delta_{n} \right\| \leq C_{m}\left\| Z_{n} - m \right\|^{2} .
\end{equation}
Moreover, since $\Phi (Z_{n}) = \int_{0}^{1}\Gamma_{m+t(Z_{n}-m)}(Z_{n}-m ) dt$, applying convexity property \textbf{(P2)}, one can check that almost surely
\begin{equation}
\left\| \delta_{n} \right\| \leq 2C \left\| Z_{n} - m \right\| .
\end{equation}
\section{$L^{p}$ rates convergence of the algorithms}\label{sectionlp}
We now consider a step sequence $\left( \gamma_{n} \right) $ of the form $\gamma_{n} = c_{\gamma}n^{-\alpha}$ with $c_{\gamma}>0$ and $\alpha \in (1/2 , 1)$. The optimal rate of convergence in quadratic mean of the Robbins-Monro algorithm is given in \cite{CCG2015}. Indeed, it was proven that there are positive constants $c',C'$ such that for all $n \geq 1$,
\begin{equation}
\frac{c'}{n^{\alpha}} \leq \mathbb{E}\left[ \left\| Z_{n} - m \right\|^{2}\right] \leq \frac{C'}{n^{\alpha}} .
\end{equation} 
Moreover, the $L^{p}$ rates of convergence were not given, but it was proven that the $p$-th moments are bounded for all integer $p$: there exists a positive constant $M_{p}$ such that for all $n \geq 1$, 
\begin{equation}\label{majpourproba}
\mathbb{E}\left[ \left\| Z_{n} - m \right\|^{2p}\right] \leq M_{p}.
\end{equation}
\subsection{$L^{p}$ rates of convergence of the Robbins-Monro algorithm}\label{setcionrmvitlp}
\begin{theo}\label{vitconvrm}
Assume \textbf{(A1)} and \textbf{(A2)} hold. For all $p \geq 1$, there is a positive constant $K_{p}$ such that for all $n \geq 1$, 
\begin{equation}\label{inegtheo}
\mathbb{E}\left[ \left\| Z_{n} - m \right\|^{2p}\right] \leq \frac{K_{p}}{n^{p\alpha}} .
\end{equation}
As a corollary, applying Cauchy-Schwarz's inequality, for all $p \geq 1$ and for all $n \geq 1$,
\begin{equation}
\mathbb{E}\left[ \left\| Z_{n}  - m \right\|^{p}\right] \leq \frac{\sqrt{K_{p}}}{n^{\frac{p\alpha}{2}}} .
\end{equation}
\end{theo}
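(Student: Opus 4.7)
The plan is induction on the integer $p \geq 1$; the statement for arbitrary real $p \geq 1$ then follows by interpolation against the uniform moment bound (\ref{majpourproba}). The base case $p=1$ is the quadratic-mean rate recalled just before the theorem. For the inductive step I fix $p \geq 2$, set $V_n := \|Z_n - m\|^2$, and assume $\mathbb{E}[V_n^q] \leq K_q/n^{q\alpha}$ for every integer $q \in \{1,\ldots,p-1\}$.

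From (\ref{algorm}) and the definition of $\xi_{n+1}$ one obtains
\begin{equation*}
V_{n+1} = V_n + 2\gamma_n\langle \xi_{n+1}, Z_n - m\rangle - 2\gamma_n \langle \Phi(Z_n), Z_n - m\rangle + \gamma_n^2,
\end{equation*}
with $\mathbb{E}[\xi_{n+1}\mid\mathcal{F}_n]=0$, $\|\xi_{n+1}\|\leq 2$ almost surely, and the pointwise increment bound $|V_{n+1}-V_n|\leq 6\gamma_n V_n^{1/2}+\gamma_n^2$. Raising this recursion to the $p$-th power by the binomial formula and taking conditional expectation kills the first-order martingale term, so that
\begin{equation*}
\mathbb{E}[V_{n+1}^p\mid\mathcal{F}_n]=V_n^p-2p\gamma_n V_n^{p-1}\langle \Phi(Z_n), Z_n - m\rangle + \mathcal{R}_n,
\end{equation*}
where $\mathcal{R}_n$ is a finite sum of terms of the form $\gamma_n^k V_n^{p-k/2}$ for $k\geq 2$. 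After taking full expectation, the inductive hypothesis combined with interpolation between the rate at level $p-1$ and the uniform bound (\ref{majpourproba}) gives $\mathbb{E}[\mathcal{R}_n] = O(n^{-(p+1)\alpha})$.

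To convert the first-order contribution into a genuine contraction, I would invoke \textbf{(P2)} through the integral representation $\Phi(Z_n) = \int_0^1 \Gamma_{m+t(Z_n-m)}(Z_n-m)\,dt$. On the event $\{\|Z_n\|\leq A\}$ every Hessian along the segment joining $m$ to $Z_n$ has eigenvalues in $[c_A, C]$, so that $\langle \Phi(Z_n), Z_n - m\rangle \geq c_A V_n$, while on the complement convexity of $G$ still yields $\langle \Phi(Z_n), Z_n - m\rangle \geq 0$. Splitting the expectation along these two events, the excursion term $\mathbb{E}[V_n^p \mathbb{1}_{\{\|Z_n\|>A\}}]$ is controlled via H\"older's and Markov's inequalities against (\ref{majpourproba}) taken at a sufficiently high order, together with the inductive rate at level $p-1$ to produce the required $n$-decay. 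One arrives at a one-step inequality
\begin{equation*}
\mathbb{E}[V_{n+1}^p] \leq (1 - c\gamma_n)\,\mathbb{E}[V_n^p] + O\!\left(n^{-(p+1)\alpha}\right)
\end{equation*}
for some constant $c>0$, to which the classical Chung-type lemma for sequences of this form delivers $\mathbb{E}[V_n^p]=O(n^{-p\alpha})$. The corollary is then immediate by Cauchy--Schwarz.

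The main technical obstacle is precisely the splitting in the third step: since $Z_n$ is not almost-surely bounded in $H$, the local strong convexity at $m$ does not translate for free into a global contraction in expectation, and the delicate balance between the excursion tail and the target rate $n^{-p\alpha}$ makes essential use of the fact that the uniform moment bound (\ref{majpourproba}) holds at \emph{every} order.
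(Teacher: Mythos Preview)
Your overall plan (induction on $p$, expand $V_{n+1}^p$, control the remainder by the inductive hypothesis) matches the paper, and your bound $\mathbb{E}[\mathcal{R}_n]=O(n^{-(p+1)\alpha})$ is correct. The gap is in the third step. With a \emph{fixed} radius $A$ you need the excursion term $\gamma_n\,\mathbb{E}\bigl[V_n^{p}\,\mathbb{1}_{\{\|Z_n\|>A\}}\bigr]$ to be $O(n^{-(p+1)\alpha})$, i.e.\ $\mathbb{E}\bigl[V_n^{p}\,\mathbb{1}_{\{\|Z_n\|>A\}}\bigr]=O(n^{-p\alpha})$. But the only sources of $n$-decay you have are the inductive rates at levels $\leq p-1$; the uniform bound (\ref{majpourproba}) carries no decay. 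The sharpest thing H\"older and Markov can produce from these ingredients is, for $r>1$,
\[
\mathbb{E}\bigl[V_n^{p}\,\mathbb{1}_{\{V_n>B\}}\bigr]\ \leq\ \bigl(\mathbb{E}[V_n^{pr}]\bigr)^{1/r}\,\mathbb{P}(V_n>B)^{1-1/r}\ \leq\ M_{pr}^{1/r}\Bigl(\tfrac{K_{p-1}}{B^{p-1}n^{(p-1)\alpha}}\Bigr)^{1-1/r},
\]
whose decay is at best just under $n^{-(p-1)\alpha}$; this is strictly slower than the required $n^{-p\alpha}$ for every $p\geq 2$. Letting $A$ grow with $n$ does not help either, since the constant $c_A$ in \textbf{(P2)} may vanish and you lose the contraction. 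So the splitting you describe cannot close the inequality.

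The paper resolves precisely this difficulty by a different mechanism. Starting from the linearization $\Phi(Z_n)=\Gamma_m(Z_n-m)+\delta_n$ one gets a clean contraction $(1-c_0\gamma_n)$ on $\mathbb{E}[V_n^p]$ together with an extra term $C_2\gamma_n\,\mathbb{E}[V_n^{p+1}]$ coming from $\|\delta_n\|\leq C_m\|Z_n-m\|^2$ (Lemma~\ref{majznp}). This higher moment is \emph{not} assumed to satisfy the target rate; instead a second recursion for $\mathbb{E}[V_n^{p+1}]$ is derived (Lemma~\ref{majznpp}), whose contraction $(1-2/n)^{p+1}$ is obtained on the growing event $\{\|Z_n-m\|\leq cn^{1-\alpha}\}$ via Lemma~5.2 of \cite{CCG2015}, with the complement handled by Markov against (\ref{majpourproba}) at arbitrarily high order (here the growing threshold is what makes the uniform bound useful). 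The two recursions are then solved simultaneously by a single induction on $n$, yielding $\mathbb{E}[V_n^{p}]\leq K_p n^{-p\alpha}$ and $\mathbb{E}[V_n^{p+1}]\leq K_{p,\beta}n^{-p\beta}$ for some $\beta\in(\alpha,\tfrac{p+2}{p}\alpha-\tfrac{1}{p})$. In short, the missing idea in your sketch is that the drift term forces a term of order $p+1$ into the recursion, and the proof must control the pair $(\mathbb{E}[V_n^p],\mathbb{E}[V_n^{p+1}])$ jointly rather than bound the excursion at a fixed level.
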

The proof is given in Appendix. Since it was proven (see \cite{CCG2015}) that the rate for $p=1$ is the optimal one, one can check, applying Hölder's inequality, that the given ones for $p \geq 2$ are also optimal. In order to prove this theorem with a strong induction on $p$ and $n$, we have to introduce two technical lemma. The first one gives an upper bound for $\mathbb{E}\left[ \left\| Z_{n+1} - m \right\|^{2p}\right] $ when inequality (\ref{inegtheo}) is verified for all $k \leq p-1$, i.e when the strong induction assumptions are verified. 
\begin{lem}\label{majznp}
Assume \textbf{(A1)} and \textbf{(A2)} hold, let $p \geq 2$, if inequality (\ref{inegtheo}) is verified for all integer for $0$ to $p-1$, there are a rank $n_{\alpha}$ and non-negative constants $c_{0},C_{1},C_{2}$ such that for all $n \geq n_{\alpha}$, 
\begin{equation}
\mathbb{E}\left[ \left\| Z_{n+1} - m \right\|^{2p}\right] \leq \left( 1-c_{0}\gamma_{n} \right) \mathbb{E}\left[ \left\| Z_{n} - m \right\|^{2p}\right] + \frac{C_{1}}{n^{(p+1)\alpha}}+ C_{2}\gamma_{n}\mathbb{E}\left[ \left\| Z_{n} - m \right\|^{2p+2}\right] .
\end{equation}
\end{lem}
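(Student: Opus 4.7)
The plan is to square-and-raise the linearization (\ref{decdelta}) to the $p$th power, take conditional expectation with respect to $\mathcal{F}_n$, and classify the resulting terms into a contraction piece, a residual proportional to $\gamma_n\|Z_n-m\|^{2p+2}$, and a remainder that is bounded by $O(n^{-(p+1)\alpha})$ through the strong induction hypothesis.

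First, I introduce $T_n := (I_H-\gamma_n\Gamma_m)(Z_n-m)$, so that $Z_{n+1}-m=T_n-\gamma_n\delta_n+\gamma_n\xi_{n+1}$. Property \textbf{(P2)} yields the pointwise contraction $\|T_n\|\leq (1-c_m\gamma_n)\|Z_n-m\|$ once $\gamma_n C\leq 1$. Writing
\[\|Z_{n+1}-m\|^{2p}=\left(\|T_n-\gamma_n\delta_n\|^{2}+2\gamma_n\langle T_n-\gamma_n\delta_n,\xi_{n+1}\rangle+\gamma_n^{2}\|\xi_{n+1}\|^{2}\right)^{p},\]
expanding by the binomial formula and taking $\mathbb{E}[\,\cdot\,|\mathcal{F}_n]$, the single term linear in $\xi_{n+1}$ vanishes because $\mathbb{E}[\xi_{n+1}|\mathcal{F}_n]=0$, and every other $\xi_{n+1}$-dependent contribution carries a factor $\gamma_n^{j}$ with $j\geq 2$. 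Using $\|\xi_{n+1}\|\leq 2$ together with the crude bound $\|T_n-\gamma_n\delta_n\|\leq (1+2C\gamma_n)\|Z_n-m\|$ coming from $\|\delta_n\|\leq 2C\|Z_n-m\|$, these $\xi_{n+1}$-terms are bounded by a constant times $\gamma_n^{2}\|Z_n-m\|^{2p-2}$, and the strong induction assumption $\mathbb{E}[\|Z_n-m\|^{2(p-1)}]\leq K_{p-1}/n^{(p-1)\alpha}$ turns this into an $O(n^{-(p+1)\alpha})$ contribution absorbed in $C_1/n^{(p+1)\alpha}$.

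Next, I treat the $\mathcal{F}_n$-measurable piece $\|T_n-\gamma_n\delta_n\|^{2p}$ by a second binomial expansion of $(\|T_n\|^{2}-2\gamma_n\langle T_n,\delta_n\rangle+\gamma_n^{2}\|\delta_n\|^{2})^{p}$. The $k=0$ term gives the desired contraction $\|T_n\|^{2p}\leq(1-c_m\gamma_n)^{2p}\|Z_n-m\|^{2p}$. Using the sharper bound $\|\delta_n\|\leq C_m\|Z_n-m\|^{2}$, the $k=1$ term is at most $2pC_m\gamma_n\|Z_n-m\|^{2p+1}+pC_m^{2}\gamma_n^{2}\|Z_n-m\|^{2p+2}$; I apply Young's inequality
\[\|Z_n-m\|^{2p+1}\leq \tfrac{\varepsilon}{2}\|Z_n-m\|^{2p}+\tfrac{1}{2\varepsilon}\|Z_n-m\|^{2p+2}\]
with $\varepsilon$ small enough that $pC_m\varepsilon<pc_m$, which transfers the odd-power mass into the contraction (leaving some $c_0>0$) and feeds the residue into $C_2\gamma_n\mathbb{E}[\|Z_n-m\|^{2p+2}]$. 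The terms with $k\geq 2$ take the form $\gamma_n^{k}\|Z_n-m\|^{2p+k}$; the pointwise growth bound $\|Z_n-m\|\leq \|Z_1\|+\|m\|+c_\gamma\sum_{j<n}j^{-\alpha}=O(n^{1-\alpha})$ given by $\|Z_{n+1}-Z_n\|\leq\gamma_n$ gives $\gamma_n\|Z_n-m\|=O(n^{1-2\alpha})\to 0$, so $\gamma_n^{k}\|Z_n-m\|^{2p+k}\leq \gamma_n^{2}\|Z_n-m\|^{2p+2}$ for $n$ large, and these are absorbed into $C_2\gamma_n\mathbb{E}[\|Z_n-m\|^{2p+2}]$ via $\gamma_n^{2}\leq\gamma_n$.

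The main bookkeeping difficulty will be the odd-power cross term $\gamma_n\langle T_n,\delta_n\rangle$: its exponent $2p+1$ is precisely the one that fits neither in the contraction nor in the low-order remainder, so a Young inequality with a carefully tuned constant is needed to simultaneously preserve a strict contraction rate $c_0>0$ and isolate a single residual term proportional to $\gamma_n\mathbb{E}[\|Z_n-m\|^{2p+2}]$. The rank $n_\alpha$ will be chosen large enough so that $\gamma_n C\leq 1$, $\gamma_n\|Z_n-m\|\leq 1$ pointwise, and the $O(\gamma_n^{2})$ tail of $(1-c_m\gamma_n)^{2p}=1-2pc_m\gamma_n+O(\gamma_n^{2})$ is dominated by the slack left by $\varepsilon$; combining the three groups then yields the stated inequality.
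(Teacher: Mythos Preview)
Your strategy is correct and reaches the stated inequality, but it is organized differently from the paper. The paper does not expand $\|Z_{n+1}-m\|^{2p}$ directly; instead it writes $\|Z_{n+1}-m\|^{2p}=\|Z_{n+1}-m\|^{2}\cdot\|Z_{n+1}-m\|^{2p-2}$ and bounds the two factors via \emph{different} decompositions: the first via the linearization~(\ref{decdelta}) (giving the $(1-c'\gamma_n)\|Z_n-m\|^{2}$, the $\|Z_n-m\|\,\|\delta_n\|$, and the $\langle Z_n-m,\xi_{n+1}\rangle$ pieces), and the second via the expansion of $(V_n+\gamma_n^{2}+2\gamma_n\langle Z_n-m,\xi_{n+1}\rangle)^{p-1}$ with $V_n=\|Z_n-m-\gamma_n\Phi(Z_n)\|^{2}$ and the convexity bound $V_n\leq\|Z_n-m\|^{2}+\gamma_n^{2}$. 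The Young-type splitting of the critical $\gamma_n\|Z_n-m\|^{2p+1}$ term is the same in both routes. Your direct $p$th-power expansion is more symmetric and avoids the separate $V_n$ machinery; the paper's route has the advantage of reusing verbatim the $V_n$ expansion already developed for Lemma~\ref{majznpp}.

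One imprecision to tighten: the assertion that the surviving $\xi_{n+1}$-terms are ``bounded by a constant times $\gamma_n^{2}\|Z_n-m\|^{2p-2}$'' is not literally true pointwise (for instance the pure $(\gamma_n^{2}\|\xi_{n+1}\|^{2})^{p}$ term is $O(\gamma_n^{2p})$ and is not dominated by $\gamma_n^{2}\|Z_n-m\|^{2p-2}$ when $\|Z_n-m\|$ is small). What is true is that each such term has the form $\mathrm{const}\cdot\gamma_n^{\,j+2k}\|Z_n-m\|^{2p-j-2k}$ with $j+2k\geq 2$, and after taking expectation and using the induction hypothesis (together with Cauchy--Schwarz for odd exponents, as the paper does) every one of them is $O(n^{-(p+1)\alpha})$. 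The conclusion you draw is correct; only the intermediate pointwise claim needs to be replaced by this term-by-term moment estimate.
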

The proof is given in Appendix. The following lemma gives an upper bound of $\mathbb{E}\left[ \left\| Z_{n+1} - m \right\|^{2p+2}\right] $ when inequality (\ref{inegtheo}) is verified for all $k \leq p-1$, i.e when the strong induction assumptions are verified. 
\begin{lem}\label{majznpp}
Assume \textbf{(A1)} and \textbf{(A2)} hold, let $p \geq 2$, if inequality (\ref{inegtheo}) is verified for all integer from $0$ to $p-1$, there are a rank $n_{\alpha}$ and non-negative constants $C_{1}',C_{2}'$ such that for all $n \geq n_{\alpha}$,
\begin{equation}
\mathbb{E}\left[ \left\| Z_{n+1} - m \right\|^{2p+2}\right] \leq \left( 1-\frac{2}{n}\right)^{p+1}\mathbb{E}\left[ \left\| Z_{n} - m \right\|^{2p+2}\right] + \frac{C_{1}'}{n^{(p+2)\alpha}} + C_{2}' \gamma_{n}^{2}\mathbb{E}\left[ \left\| Z_{n} - m \right\|^{2p}\right] .
\end{equation}
\end{lem}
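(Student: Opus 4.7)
The plan is to parallel the argument used for Lemma \ref{majznp}, but now applied to the $(2p+2)$-th power of $\|Z_{n+1}-m\|$ and producing a weaker (purely polynomial) contraction coefficient. The starting point will again be the linearized decomposition (\ref{decdelta}): setting $R_{n} := (I_{H} - \gamma_{n}\Gamma_{m})(Z_{n} - m) - \gamma_{n}\delta_{n}$, one has
\[ \|Z_{n+1}-m\|^{2} = \|R_{n}\|^{2} + 2\gamma_{n}\langle R_{n}, \xi_{n+1}\rangle + \gamma_{n}^{2}\|\xi_{n+1}\|^{2}. \]
I would raise this identity to the power $p+1$ via the multinomial formula, producing a finite sum of terms of the form $\|R_{n}\|^{2a}\langle R_{n}, \xi_{n+1}\rangle^{b}\|\xi_{n+1}\|^{2c}\gamma_{n}^{b+2c}$ with $a+b+c = p+1$. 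Taking conditional expectation with respect to $\mathcal{F}_{n}$ and exploiting $\|\xi_{n+1}\|\leq 2$ almost surely, each remaining moment of $\xi_{n+1}$ reduces to a bounded $\mathcal{F}_{n}$-measurable factor.

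The leading contribution is $\|R_{n}\|^{2(p+1)}$, which is where the advertised $(1-2/n)^{p+1}$ should arise. Convexity property \textbf{(P2)} yields $\|(I_{H}-\gamma_{n}\Gamma_{m})(Z_{n}-m)\|^{2} \leq (1-\gamma_{n}\lambda_{\min})^{2}\|Z_{n}-m\|^{2}$ for $\gamma_{n}\leq 1/C$. Since $\alpha<1$, the step $\gamma_{n} = c_{\gamma}n^{-\alpha}$ eventually dominates $1/n$, so for $n$ larger than some $n_{\alpha}$ one can check that $(1-\gamma_{n}\lambda_{\min})^{2} \leq 1-2/n$. A Young-type inequality will then absorb the $\gamma_{n}\delta_{n}$ perturbation inside $R_{n}$, at the cost of a remainder of higher order in $\gamma_{n}$ and in $\|\delta_{n}\|$; using alternately the two bounds $\|\delta_{n}\|\leq 2C\|Z_{n}-m\|$ and $\|\delta_{n}\|\leq C_{m}\|Z_{n}-m\|^{2}$, this remainder is controllable by the induction hypothesis (\ref{inegtheo}) for $k\leq p-1$ and ends up as $O(n^{-(p+2)\alpha})$.

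For the remaining multinomial terms one always has $b+2c \geq 1$, so each carries at least one explicit factor of $\gamma_{n}$. A case split should deliver the final shape: first, terms reducing, via $\|\delta_{n}\|\leq 2C\|Z_{n}-m\|$ and the induction hypothesis on $k \leq p-1$, to expressions of the form $\gamma_{n}^{s} n^{-k\alpha}$ with $s+k \geq p+2$ will all be absorbed into $C_{1}'/n^{(p+2)\alpha}$; second, the single term $(a,b,c) = (p,0,1)$, equal to $(p+1)\gamma_{n}^{2}\|R_{n}\|^{2p}\mathbb{E}[\|\xi_{n+1}\|^{2}\mid\mathcal{F}_{n}]$, will produce the last summand $C_{2}'\gamma_{n}^{2}\mathbb{E}[\|Z_{n}-m\|^{2p}]$ after bounding $\|R_{n}\|^{2p}$ by $(1+O(\gamma_{n}))^{p}\|Z_{n}-m\|^{2p}$. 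The principal obstacle will be this bookkeeping: tracking each multinomial contribution, choosing the appropriate bound on $\delta_{n}$ for each, and making sure the two moments $\mathbb{E}[\|Z_{n}-m\|^{2p}]$ and $\mathbb{E}[\|Z_{n}-m\|^{2p+2}]$, which are not yet controlled by the induction hypothesis, appear only with the prefactors $\gamma_{n}^{2}$ and $(1-2/n)^{p+1}$ respectively, as demanded by the statement.
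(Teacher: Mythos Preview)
Your plan has a genuine gap in the treatment of the leading term $\|R_{n}\|^{2(p+1)}$. You propose to obtain the contraction from $\|(I_{H}-\gamma_{n}\Gamma_{m})(Z_{n}-m)\|^{2}\leq(1-\gamma_{n}\lambda_{\min})^{2}\|Z_{n}-m\|^{2}$ and then ``absorb'' the $\gamma_{n}\delta_{n}$ perturbation by a Young inequality. But neither bound on $\delta_{n}$ makes this work. With the linear bound $\|\delta_{n}\|\leq 2C\|Z_{n}-m\|$, the cross term in $\|R_{n}\|^{2(p+1)}$ of type $(p+1)\cdot 2\gamma_{n}\|Z_{n}-m\|^{2p+1}\|\delta_{n}\|$ produces a contribution $c\,\gamma_{n}\,\mathbb{E}[\|Z_{n}-m\|^{2p+2}]$ with a \emph{fixed} constant $c$; since $\lambda_{\min}\leq C$, the resulting coefficient in front of $\mathbb{E}[\|Z_{n}-m\|^{2p+2}]$ exceeds $1$ and destroys the contraction you need. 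With the quadratic bound $\|\delta_{n}\|\leq C_{m}\|Z_{n}-m\|^{2}$, the same cross term becomes $\gamma_{n}\,\mathbb{E}[\|Z_{n}-m\|^{2p+3}]$, which involves moments of order $2p+3$ and $2p+4$ that are \emph{not} covered by the induction hypothesis; the uniform bound (\ref{majpourproba}) only gives $\gamma_{n}^{2}M_{p+2}=O(n^{-2\alpha})$, far from the $O(n^{-(p+2)\alpha})$ you claim.

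The paper avoids this by working with decomposition (\ref{decxi}) rather than (\ref{decdelta}): it sets $V_{n}:=\|Z_{n}-m-\gamma_{n}\Phi(Z_{n})\|^{2}$, so that no $\delta_{n}$ appears in the leading term at all. The contraction $\mathbb{E}[V_{n}^{p+1}]\leq (1-2/n)^{p+1}\mathbb{E}[\|Z_{n}-m\|^{2p+2}]+O(n^{-(p+2)\alpha})$ is then obtained by a \emph{truncation} argument: on the event $\{\|Z_{n}-m\|\leq cn^{1-\alpha}\}$ one invokes Lemma~5.2 of \cite{CCG2015} (local strong convexity of $G$) to get $V_{n}\leq(1-2/n)\|Z_{n}-m\|^{2}$ directly, and on the complementary event one combines the deterministic bound $\|Z_{n}-m\|\leq c_{0}n^{1-\alpha}$ with Markov's inequality and the uniform moment bound (\ref{majpourproba}) for a sufficiently high $q$. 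This truncation step is the missing idea in your proposal; without it, or an equivalent device, the bookkeeping you describe cannot close.
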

The proof is given in Appendix. Note that for the sake of simplicity, we denote by the same way the ranks in Lemma~\ref{majznp} and Lemma~\ref{majznpp}. 
\subsection{Optimal rate of convergence in quadratic mean and $L^{p}$ rates of converge of the averaged algorithm}\label{sectionmoyvtlp}
As done in \cite{HC} and \cite{Pel00}, summing equalities (\ref{decdelta}) and applying Abel's transform, we get
\begin{equation}\label{decmoyennise}
n\Gamma_{m}\left( \overline{Z}_{n} - m \right) = \frac{T_{1}}{\gamma_{1}}- \frac{T_{n+1}}{\gamma_{n}}+ \sum_{k=2}^{n}T_{k}\left( \frac{1}{\gamma_{k}}- \frac{1}{\gamma_{k-1}}\right) + \sum_{k=1}^{n} \delta_{k} + \sum_{k=1}^{n}\xi_{k+1} ,
\end{equation}
with $T_{k}:= Z_{k}-m$. Using this decomposition and Theorem \ref{vitconvrm}, we can derive the $L^{p}$ rates of convergence of the averaged algorithm.
\begin{theo}\label{theol2moy}
Assume \textbf{(A1)} and \textbf{(A2)} hold, for all integer $p \geq 1$, there is a positive constant $A_{p}$ such that for all $n \geq 1$,
\begin{equation}
 \mathbb{E}\left[ \left\| \overline{Z}_{n} - m \right\|^{2p}\right] \leq \frac{A_{p}}{n^{p}}.
\end{equation}
\end{theo}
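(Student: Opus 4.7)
My plan is to start from the Abel summation identity (\ref{decmoyennise}) and solve for $\overline{Z}_n - m$ by applying $\Gamma_m^{-1}$ to both sides and dividing by $n$; this is justified since property \textbf{(P2)} gives that $\Gamma_m$ is invertible with $\|\Gamma_m^{-1} h\| \leq \|h\|/c_m$ for every $h \in H$. This produces an identity of the shape
\begin{equation*}
\overline{Z}_n - m \;=\; \frac{1}{n}\, \Gamma_m^{-1}\!\left( R_n^{(1)} + R_n^{(2)} + R_n^{(3)} + R_n^{(4)} + M_n \right),
\end{equation*}
where $R_n^{(1)} = T_1/\gamma_1$, $R_n^{(2)} = -T_{n+1}/\gamma_n$, $R_n^{(3)} = \sum_{k=2}^n T_k(1/\gamma_k - 1/\gamma_{k-1})$, $R_n^{(4)} = \sum_{k=1}^n \delta_k$, and the martingale $M_n = \sum_{k=1}^n \xi_{k+1}$. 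After applying Minkowski's inequality in $L^{2p}$, it suffices to bound the $L^{2p}$-norm of each of these five terms by $O(n^{1/2})$, since the $1/n$ prefactor then yields $\mathbb{E}\|\overline{Z}_n - m\|^{2p} = O(n^{-p})$, which is exactly the claim.

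For $R_n^{(1)}$, $R_n^{(2)}$, $R_n^{(3)}$ I would invoke Theorem \ref{vitconvrm} directly, which gives $\|T_k\|_{L^{2p}} \leq K_p^{1/(2p)}\, k^{-\alpha/2}$; combined with $1/\gamma_k - 1/\gamma_{k-1} = O(k^{\alpha-1})$, the Abel sum contributes $\sum_{k=2}^n k^{\alpha-1}\cdot k^{-\alpha/2} = O(n^{\alpha/2})$, and the boundary term gives $\|T_{n+1}/\gamma_n\|_{L^{2p}} = O(n^{\alpha/2})$. Both are $O(n^{1/2})$ since $\alpha < 1$. For the nonlinear remainder $R_n^{(4)}$, I would combine the quadratic bound $\|\delta_k\| \leq C_m \|T_k\|^2$ with Theorem \ref{vitconvrm} applied with $2p$ in place of $p$, yielding $\|\delta_k\|_{L^{2p}} \leq C_m \|T_k\|_{L^{4p}}^2 = O(k^{-\alpha})$, whose partial sum is $O(n^{1-\alpha})$; the condition $\alpha > 1/2$ then gives $1 - \alpha < 1/2$, as required.

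The delicate step is the martingale term $M_n$, for which I would apply a Burkholder-Davis-Gundy type inequality in a separable Hilbert space (as found, for instance, in Duflo). The key observation is that $\|\xi_{k+1}\| \leq 2$ almost surely, since $\|\Phi(Z_k)\| \leq 1$ by the triangle inequality for expectations and the normalized gradient vector has unit norm. Such a BDG inequality then gives
\begin{equation*}
\mathbb{E}\|M_n\|^{2p} \;\leq\; C_p\, \mathbb{E}\!\left( \sum_{k=1}^n \|\xi_{k+1}\|^2 \right)^{\!p} \;\leq\; (4 C_p)\, n^p,
\end{equation*}
so $\|M_n\|_{L^{2p}} = O(n^{1/2})$. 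Summing all five contributions and re-inserting the $\Gamma_m^{-1}/n$ factor yields the stated bound $A_p/n^p$. I expect the main obstacle to be invoking the correct form of the BDG (or Rosenthal-type) inequality for Hilbert-space-valued martingales with bounded increments; the bookkeeping on exponents in the other four terms is routine but relies crucially on $1/2 < \alpha < 1$, which gives each term the decisive slack against the threshold $n^{1/2}$.
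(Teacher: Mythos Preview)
Your proposal is correct and follows essentially the same route as the paper: the same five-term Abel decomposition (\ref{decmoyennise}), the same use of $\lambda_{\min}>0$ to invert $\Gamma_m$, the triangle/Minkowski inequality in $L^{2p}$ (which is exactly the paper's Lemma~\ref{lemsum}), and the same termwise bounds using Theorem~\ref{vitconvrm} together with $\|\delta_k\|\le C_m\|T_k\|^2$ and $1/\gamma_k-1/\gamma_{k-1}=O(k^{\alpha-1})$.

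The only substantive difference is the martingale term. You invoke a Hilbert-space BDG/Rosenthal inequality as a black box; the paper instead states and proves the needed bound directly (Lemma~A.2: $\mathbb{E}\|M_n\|^{2p}\le C_p n^p$ for martingale differences with $\|\xi_k\|\le M$ a.s.), by a short induction on $p$. The authors explicitly remark that they could not locate this exact statement in a published reference for general separable Hilbert spaces, which is why they include a self-contained proof. Your approach is cleaner if one is willing to cite a suitable Pinelis/BDG result; the paper's approach is more self-contained and avoids exactly the obstacle you anticipated. Either way the argument goes through with the same exponent bookkeeping, and your observation that $\alpha\in(1/2,1)$ is what makes every non-martingale term $o(n^{1/2})$ in $L^{2p}$ matches the paper's analysis.
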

The proof is given in Appendix. It heavily relies on Theorem \ref{vitconvrm} and on the following lemma which gives a bound of the $p$-th moments of the sum of (non necessarily independent) random variables. Note that this is probably not a new result but we were not able to find a proof in a published reference.
\begin{lem}\label{lemsum}
Let $Y_{1},...,Y_{n}$ be random variables taking values in a normed vector space such that for all positive constant $q$ and for all $k \geq 1$, $\mathbb{E}\left[ \left\| Y_{k} \right\|^{q} \right] < \infty$. Thus, for all constants $a_{1},...,a_{n}$ and for all integer $p$,
\begin{equation}
\mathbb{E}\left[ \left\| \sum_{k=1}^{n} a_{k}Y_{k} \right\|^{p} \right] \leq \left( \sum_{k=1}^{n} \left| a_{k} \right| \left( \mathbb{E}\left[ \left\| Y_{k} \right\|^{p} \right] \right)^{\frac{1}{p}} \right)^{p}
\end{equation}
\end{lem}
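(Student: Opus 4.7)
The statement is essentially Minkowski's inequality (the triangle inequality for $L^p$ norms) combined with the triangle inequality in the underlying normed vector space. The plan is therefore to reduce the problem to scalar random variables $\|Y_k\|$ by first applying the norm triangle inequality pointwise, and then to invoke Minkowski on the nonnegative real-valued random variables $|a_k|\,\|Y_k\|$.

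Concretely, I would proceed in two steps. First, almost surely the triangle inequality in the ambient normed vector space yields
\begin{equation*}
\left\| \sum_{k=1}^{n} a_{k} Y_{k} \right\| \leq \sum_{k=1}^{n} |a_{k}|\, \|Y_{k}\|.
\end{equation*}
Raising this inequality to the power $p$ and taking expectations preserves the direction of the inequality (both sides are nonnegative), giving
\begin{equation*}
\mathbb{E}\left[ \left\| \sum_{k=1}^{n} a_{k} Y_{k} \right\|^{p} \right] \leq \mathbb{E}\left[ \left( \sum_{k=1}^{n} |a_{k}|\, \|Y_{k}\| \right)^{p} \right].
\end{equation*}
Second, the right-hand side is the $p$-th power of the $L^p$-norm of the real-valued random variable $\sum_{k=1}^{n} |a_k|\,\|Y_k\|$, which by Minkowski's inequality in $L^p(\Omega)$ is bounded by
\begin{equation*}
\left( \sum_{k=1}^{n} |a_{k}| \bigl( \mathbb{E}[\|Y_{k}\|^{p}] \bigr)^{1/p} \right)^{p}.
\end{equation*}
Combining the two inequalities gives exactly the claim. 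The integrability assumption $\mathbb{E}[\|Y_k\|^q]<\infty$ for every $q>0$ ensures that all quantities appearing are finite, so no truncation argument is required.

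There is no real obstacle in this proof: once one recognises that the statement is a Minkowski bound in which the scalar coefficients have been pulled outside by the norm's homogeneity, the argument is two lines. The only point worth noting is that $p$ needs to be at least $1$ for Minkowski to apply, which is consistent with the hypothesis $p$ integer used in the intended application (bounding the $p$-th moments of the sum in equation~(\ref{decmoyennise}) from Theorem~\ref{theol2moy}).
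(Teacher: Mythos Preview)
Your proof is correct. The paper's argument reaches the same conclusion but by a more explicit route: after the pointwise triangle inequality it expands $\left(\sum_k |a_k|\,\|Y_k\|\right)^p$ via the multinomial formula and bounds each cross term $\mathbb{E}\bigl[\|Y_1\|^{b_1}\cdots\|Y_n\|^{b_n}\bigr]$ by $\prod_k \bigl(\mathbb{E}[\|Y_k\|^p]\bigr)^{b_k/p}$ using the generalized H\"older inequality, then reassembles the multinomial. In effect the paper reproves Minkowski's inequality from H\"older in this specific instance, whereas you simply invoke Minkowski directly; your argument is shorter and conceptually cleaner, at the cost of citing a named inequality rather than deriving it. Both approaches require $p\geq 1$, which you correctly flag.
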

The proof is given in Appendix. Finally, the following proposition ensures that the rate of convergence in quadratic mean given by Theorem \ref{theol2moy} is the optimal one.
\begin{prop}\label{propbonvit}
Assume \textbf{(A1)} and \textbf{(A2)} hold, there is a positive constant $c$ such that for all $n \geq 1$,
\[
\mathbb{E}\left[ \left\| \overline{Z}_{n} - m \right\|^{2}\right] \geq \frac{c}{n}.
\]
\end{prop}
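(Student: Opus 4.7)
The plan is to extract a lower bound on $\|\overline{Z}_n - m\|^2$ from the decomposition (\ref{decmoyennise}), viewing $n\Gamma_m(\overline{Z}_n - m)$ as the sum of a martingale part $M_n := \sum_{k=1}^n \xi_{k+1}$ whose second moment grows linearly in $n$, and a remainder $R_n$ (the first four terms of (\ref{decmoyennise})) that is of strictly smaller order. Because $\|\Gamma_m u\| \leq C\|u\|$ for all $u \in H$ by \textbf{(P2)}, we have $\|\overline{Z}_n-m\|^2 \geq C^{-2}\|\Gamma_m(\overline{Z}_n-m)\|^2$, so it suffices to show $\mathbb{E}\left[\|n\Gamma_m(\overline{Z}_n-m)\|^2\right] \geq c' n$ for $n$ large. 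Combined with the elementary inequality $\|a+b\|^2 \geq \tfrac{1}{2}\|b\|^2 - \|a\|^2$, this reduces the task to establishing (i) $\mathbb{E}[\|M_n\|^2] \geq c_1 n$ eventually, and (ii) $\mathbb{E}[\|R_n\|^2] = o(n)$.

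For (i), I would compute $\mathbb{E}[\|\xi_{k+1}\|^2 \mid \mathcal{F}_k]$ explicitly. Letting $V_{k+1} := (X_{k+1}-Z_k)/\|X_{k+1}-Z_k\|$, which is a unit vector satisfying $\mathbb{E}[V_{k+1}\mid\mathcal{F}_k] = -\Phi(Z_k)$, expanding $\xi_{k+1} = \Phi(Z_k)+V_{k+1}$ gives $\mathbb{E}[\|\xi_{k+1}\|^2\mid\mathcal{F}_k] = 1 - \|\Phi(Z_k)\|^2$. From $\Phi(h) = \int_0^1 \Gamma_{m+t(h-m)}(h-m)\,dt$ and property \textbf{(P2)}, $\|\Phi(Z_k)\| \leq C\|Z_k-m\|$, so Theorem~\ref{vitconvrm} with $p=1$ yields $\mathbb{E}[\|\Phi(Z_k)\|^2] \leq C^2K_1 k^{-\alpha} \to 0$. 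Since $(\xi_{k+1})$ is a Hilbert-valued martingale difference sequence, the cross terms vanish and $\mathbb{E}[\|M_n\|^2] = \sum_{k=1}^n \mathbb{E}[\|\xi_{k+1}\|^2]$; each summand converges to $1$, so eventually $\mathbb{E}[\|M_n\|^2] \geq n/2$.

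For (ii), I would estimate each piece of $R_n$ with the help of Theorem~\ref{vitconvrm} and Lemma~\ref{lemsum}. The boundary term $T_1/\gamma_1$ is deterministically bounded, and $\mathbb{E}[\|T_{n+1}/\gamma_n\|^2] \lesssim n^{\alpha}$. For the telescoping part, the estimates $\gamma_k^{-1}-\gamma_{k-1}^{-1} = O(k^{\alpha-1})$ and $\mathbb{E}[\|T_k\|^2]^{1/2} \lesssim k^{-\alpha/2}$ together with Lemma~\ref{lemsum} (applied with $p=2$) bound its second moment by $\bigl(\sum_{k=2}^n k^{\alpha/2-1}\bigr)^2 \lesssim n^\alpha$. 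Finally, the bound $\|\delta_k\| \leq C_m\|T_k\|^2$ and the $L^4$ estimate $\mathbb{E}[\|T_k\|^4]^{1/2} \lesssim k^{-\alpha}$ from Theorem~\ref{vitconvrm} give, again via Lemma~\ref{lemsum}, $\mathbb{E}[\|\sum_{k=1}^n \delta_k\|^2]^{1/2} \lesssim \sum_{k=1}^n k^{-\alpha} \lesssim n^{1-\alpha}$, that is, a second moment of order $n^{2(1-\alpha)}$.

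All four contributions to $\mathbb{E}[\|R_n\|^2]$ are therefore $o(n)$, and together with (i) they yield $\mathbb{E}[\|n\Gamma_m(\overline{Z}_n - m)\|^2] \geq cn$ for $n$ large, whence the claimed lower bound. The main technical point is the nonlinearity term $\sum_{k=1}^n \delta_k$: its $L^2$ norm must remain strictly below $\sqrt{n}$, which happens precisely because $2(1-\alpha) < 1$, i.e.\ because the step exponent satisfies $\alpha>1/2$; the martingale noise level (driven to $1$ in the limit) is what prevents $\overline{Z}_n$ from converging faster than the parametric rate.
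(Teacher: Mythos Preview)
Your proposal is correct and follows essentially the same route as the paper: both use decomposition~(\ref{decmoyennise}) to write $n\Gamma_m(\overline{Z}_n-m)=M_n+R_n$, compute $\mathbb{E}[\|\xi_{k+1}\|^2\mid\mathcal{F}_k]=1-\|\Phi(Z_k)\|^2$ to show $\mathbb{E}[\|M_n\|^2]\sim n$, bound each piece of $R_n$ exactly as you do to obtain $\mathbb{E}[\|R_n\|^2]=o(n)$, and then pass from $\|\Gamma_m(\overline{Z}_n-m)\|$ to $\|\overline{Z}_n-m\|$ via $\lambda_{\max}\le C$. The only cosmetic difference is that the paper expands $\|M_n+R_n\|^2$ and bounds the cross term by Cauchy--Schwarz, whereas you use the equivalent elementary inequality $\|a+b\|^2\ge\tfrac12\|b\|^2-\|a\|^2$; both give a lower bound valid for large $n$, and extending to all $n\ge1$ is the usual finite-minimum argument.
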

Note that applying Hölder's inequality, previous proposition also ensures that the $L^{p}$ rates of convergence given by Theorem \ref{theol2moy} are the optimal ones.

\section{Almost sure rates of convergence}\label{sectionas}
It is proven in \cite{HC} that the Robbins-Monro algorithm converges almost surely to the geometric median. A direct application of Theorem \ref{vitconvrm} and Borel-Cantelli's lemma gives  the following rates of convergence.
\begin{theo}\label{vitas}
Assume \textbf{(A1)} and \textbf{(A2)} hold, for all $\beta < \alpha$, 
\begin{equation}
\left\| Z_{n} - m \right\| = o \left( \frac{1}{n^{\beta /2}}\right) \quad a.s .
\end{equation}
\end{theo}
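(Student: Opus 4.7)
The plan is to combine the moment bound of Theorem \ref{vitconvrm} with Markov's inequality and the first Borel--Cantelli lemma; this is the standard route for upgrading $L^{p}$ rates to almost sure rates, and the exponent $p$ is a free parameter we can tune.

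Fix $\beta < \alpha$ and $\epsilon>0$. For any integer $p\geq 1$, Markov's inequality applied to $\|Z_n - m\|^{2p}$ together with Theorem~\ref{vitconvrm} yields
\[
P\!\left( n^{\beta/2}\|Z_n - m\| > \epsilon \right)
= P\!\left( \|Z_n - m\|^{2p} > \epsilon^{2p} n^{-p\beta} \right)
\leq \frac{\mathbb{E}\bigl[\|Z_n - m\|^{2p}\bigr]}{\epsilon^{2p} n^{-p\beta}}
\leq \frac{K_p}{\epsilon^{2p}\, n^{p(\alpha-\beta)}}.
\]
Since $\alpha-\beta>0$, I would choose $p$ large enough that $p(\alpha-\beta)>1$; then $\sum_{n\geq 1} P( n^{\beta/2}\|Z_n - m\| > \epsilon )<\infty$, and the first Borel--Cantelli lemma gives that, almost surely, $n^{\beta/2}\|Z_n-m\|>\epsilon$ holds for only finitely many $n$. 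In particular $\limsup_n n^{\beta/2}\|Z_n-m\|\leq \epsilon$ a.s.

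To promote this to convergence to zero, I would apply the previous step along a countable sequence $\epsilon_k \downarrow 0$. Intersecting the corresponding full-measure events yields an event of full measure on which $\limsup_n n^{\beta/2}\|Z_n-m\|\leq \epsilon_k$ for every $k$, hence $\limsup_n n^{\beta/2}\|Z_n-m\|=0$; this is exactly the statement $\|Z_n-m\|=o(n^{-\beta/2})$ a.s.

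There is no real obstacle here: the only delicate point is to notice that $p$ must be chosen depending on $\beta$ (taking $p>1/(\alpha-\beta)$), which is why the conclusion is stated for arbitrary $\beta<\alpha$ rather than $\beta=\alpha$; the borderline case would require summability for $p(\alpha-\beta)=0$, which the $L^{p}$ bound cannot provide through Borel--Cantelli alone.
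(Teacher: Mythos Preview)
Your proof is correct and follows essentially the same approach as the paper: apply Markov's inequality to the $2p$-th moment bound from Theorem~\ref{vitconvrm}, choose $p>1/(\alpha-\beta)$ so that the resulting series is summable, and invoke Borel--Cantelli. The only cosmetic difference is in the final step: the paper first obtains $\|Z_n-m\|=O(n^{-\beta'/2})$ a.s.\ for every $\beta'<\alpha$ and then picks an intermediate exponent $\beta'\in(\beta,\alpha)$ to deduce the little-$o$ at level $\beta$, whereas you fix $\beta$ and let $\epsilon\downarrow 0$ along a countable sequence; both arguments are equivalent.
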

The proof is given in a Appendix. As a corollary, using decomposition (\ref{decmoyennise}) and Theorem~\ref{vitas}, we get the following bound of the rate of convergence of the averaged algorithm:
\begin{cor}\label{vitasmoy}
Assume \textbf{(A1)} and \textbf{(A2)} hold, for all $\delta >0$,
\begin{equation}
\left\| \overline{Z}_{n} - m \right\| = o \left( \frac{\left(\ln n\right)^{\frac{1+\delta}{2}}}{\sqrt{n}}\right) \quad a.s .
\end{equation}
\end{cor}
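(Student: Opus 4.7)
The plan is to start from decomposition~(\ref{decmoyennise}), divide both sides by $n$, and invert $\Gamma_m$ on the left. Property~\textbf{(P2)} gives $\lambda_{\min} \geq c_m > 0$, so $\Gamma_m$ is boundedly invertible. It therefore suffices to show that each of the five summands on the right-hand side of~(\ref{decmoyennise}) is $o\bigl(\sqrt{n}\,(\ln n)^{(1+\delta)/2}\bigr)$ almost surely.

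For the first four terms I use only Theorem~\ref{vitas}, which provides $\|T_k\| = o(k^{-\beta/2})$ a.s.\ for every $\beta < \alpha$. The initial term $T_1/\gamma_1$ is $O(1)$. Since $1/\gamma_n = c_\gamma^{-1} n^\alpha$, the boundary term satisfies $\|T_{n+1}/\gamma_n\| = o(n^{\alpha - \beta/2})$ a.s. For the Abel sum, a Taylor expansion yields $1/\gamma_k - 1/\gamma_{k-1} = O(k^{\alpha-1})$, so its norm is dominated by $\sum_{k=2}^n o(k^{\alpha - 1 - \beta/2}) = o(n^{\alpha - \beta/2})$ a.s.\ (the exponent stays above $-1$ because $\beta < \alpha < 1$). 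These two contributions are $o(\sqrt{n})$ as soon as $\beta > 2\alpha - 1$, a constraint compatible with $\beta < \alpha$ since $\alpha < 1$. For the remainder sum $\sum_{k=1}^{n}\delta_k$, the quadratic estimate $\|\delta_k\| \leq C_m \|T_k\|^2 = o(k^{-\beta})$ combined with $\beta < 1$ gives $o(n^{1-\beta})$ a.s., which is $o(\sqrt{n})$ as soon as $\beta > 1/2$; this is compatible with $\beta < \alpha$ because $\alpha > 1/2$. Choosing any $\beta \in \bigl(\max(2\alpha - 1,\,1/2),\,\alpha\bigr)$, each of these four terms is therefore $o(\sqrt{n})$, hence trivially $o\bigl(\sqrt{n}\,(\ln n)^{(1+\delta)/2}\bigr)$.

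The main obstacle is the martingale $M_n := \sum_{k=1}^{n}\xi_{k+1}$. Its increments are a.s.\ bounded: $\|\Phi(Z_k)\| \leq 1$ and $\bigl\|(X_{k+1}-Z_k)/\|X_{k+1}-Z_k\|\bigr\| = 1$ give $\|\xi_{k+1}\| \leq 2$, whence $\mathbb{E}\bigl[\|\xi_{k+1}\|^2 \mid \mathcal{F}_k\bigr] \leq 4$ a.s. With the normalization $a_n := \sqrt{n\,(\ln n)^{1+\delta}}$, the normalized martingale $\sum_{k \geq 2}\xi_{k+1}/a_k$ is bounded in $L^{2}(H)$ because $\sum_{k \geq 2} 4/a_k^2 = 4\sum_{k \geq 2} 1/\bigl(k(\ln k)^{1+\delta}\bigr) < \infty$ for every $\delta > 0$. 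Doob's $L^2$ martingale convergence theorem (which extends to separable Hilbert spaces, either coordinate-wise in an orthonormal basis or directly via the Radon--Nikod\'ym property) makes this series converge almost surely, and Kronecker's lemma then yields $M_n / a_n \to 0$ a.s., i.e.\ $M_n = o\bigl(\sqrt{n}\,(\ln n)^{(1+\delta)/2}\bigr)$. Combining the five bounds, applying $\Gamma_m^{-1}$, and dividing by $n$ produces the announced rate.
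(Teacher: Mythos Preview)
Your proof is correct and follows essentially the same route as the paper: the same decomposition~(\ref{decmoyennise}), the same almost-sure bounds on the four non-martingale terms via Theorem~\ref{vitas} with a choice of $\beta \in \bigl(\max(2\alpha-1,\tfrac12),\alpha\bigr)$, and the same martingale law of large numbers for the $\xi$-sum. The only cosmetic difference is that the paper invokes Duflo's law of large numbers for martingales (justified in Hilbert space via a Robbins--Siegmund argument in the ensuing remark), whereas you unpack the same mechanism directly through $L^2$-martingale convergence plus Kronecker's lemma.
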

The proof is given in Appendix. 

\section*{Acknowledgements}
The author thanks Herv\'e Cardot and Peggy C\'enac for their patience, their trust, and their advice which were very helpful.

\begin{appendix}

\section{Appendix}\label{sectionpreuves}
\subsection{Proofs of Section \ref{setcionrmvitlp}}
First we recall some technical inequalities (see \cite{petrov1995limit} for example).
\begin{lem}\label{lemtechnique}
Let $a,b,c$ be positive constants. Thus,
\begin{align*}
& ab \leq \frac{a^{2}}{2c}+ \frac{b^{2}c}{2} , & a \leq \frac{c}{2} + \frac{a^{2}}{2c} .
\end{align*}
Moreover let $k,p$ be positive integers and $a_{1},...,a_{p}$ be positive constants. Thus,
\[ 
\left( \sum_{j=1}^{p} a_{j} \right)^{k} \leq p^{k-1}\sum_{j=1}^{p}a_{j}^{k}.
\]
\end{lem}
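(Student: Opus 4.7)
The plan is to derive each of the three inequalities from an elementary convexity or completed-square argument.

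For the first inequality $ab\le \frac{a^2}{2c}+\frac{b^2c}{2}$ (a Young-type inequality), I would start from the obvious bound $\bigl(\tfrac{a}{\sqrt c}-b\sqrt c\bigr)^2\ge 0$. Expanding gives $\tfrac{a^2}{c}-2ab+b^2c\ge 0$, so $2ab\le \tfrac{a^2}{c}+b^2c$, and dividing by $2$ yields the claim. The second inequality $a\le \tfrac{c}{2}+\tfrac{a^2}{2c}$ is then the special case $b=1$ of the first one (or, equivalently, follows directly from $(a/\sqrt c-\sqrt c)^2\ge 0$).

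For the third inequality $\bigl(\sum_{j=1}^{p}a_j\bigr)^k\le p^{k-1}\sum_{j=1}^{p}a_j^k$, I would invoke the convexity of $x\mapsto x^k$ on $[0,\infty)$ (valid since $k\ge 1$). By Jensen's inequality applied to the uniform probability measure on $\{1,\dots,p\}$,
\[
\left(\frac{1}{p}\sum_{j=1}^{p}a_j\right)^{k}\le \frac{1}{p}\sum_{j=1}^{p}a_j^{k},
\]
and multiplying both sides by $p^k$ gives the stated inequality. Alternatively, one can obtain the same bound via Hölder's inequality with exponents $k$ and $k/(k-1)$: $\sum_j a_j\cdot 1\le \bigl(\sum_j a_j^{k}\bigr)^{1/k}p^{(k-1)/k}$, then raise to the $k$-th power.

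There is no real obstacle here; the only point worth a line of justification is that $x\mapsto x^k$ is convex for $k\ge 1$, which is immediate from the non-negativity of its second derivative. All three inequalities are classical and the proof reduces to a couple of elementary lines once the right algebraic identity (completed square) and the right convexity fact (Jensen) are invoked.
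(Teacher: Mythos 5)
Your proof is correct: the completed-square derivation of the Young-type inequality, its specialization $b=1$, and the Jensen (or H\"older) argument for the power-sum bound are all valid. The paper itself gives no proof of this lemma --- it is merely recalled with a citation to a standard reference --- so there is no authorial argument to compare against; your derivations are exactly the classical ones such a reference would supply.
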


\begin{proof}[Proof of Lemma \ref{majznpp}] We suppose from now that for all $k \leq p-1$, there is a positive constant $K_{k}$ such that for all $n \geq 1$, 
\begin{equation}\label{vitlplem}
\mathbb{E}\left[ \left\| Z_{n} - m \right\|^{2k}\right] \leq \frac{K_{k}}{n^{k\alpha}}.
\end{equation} 
Using decomposition (\ref{decxi}) and Cauchy-Schwarz's inequality, since by definition of $\xi_{n+1}$ we have $\| \xi_{n+1}\| -2\left\langle \Phi (Z_{n}) , \xi_{n+1} \right\rangle  \leq 1$,
\begin{align*}
\left\| Z_{n+1}-m \right\|^{2} & = \left\| Z_{n} - m - \gamma_{n}\Phi (Z_{n}) \right\|^{2} + \gamma_{n}^{2}\left\| \xi_{n+1} \right\|^{2} +2\gamma_{n} \left\langle Z_{n} -m - \gamma_{n} \Phi (Z_{n}) , \xi_{n+1} \right\rangle \\
& \leq \left\| Z_{n} - m - \gamma_{n}\Phi (Z_{n}) \right\|^{2} + \gamma_{n}^{2} + 2\gamma_{n}\left\langle Z_{n}-m , \xi_{n+1}\right\rangle .
\end{align*}
Let $V_{n}:= \left\| Z_{n} - m - \gamma_{n} \Phi (Z_{n}) \right\|^{2}$. Using previous inequality,
\begin{align}
\left\| Z_{n+1}-m \right\|^{2p+2}&   \leq \left( V_{n} + \gamma_{n}^{2} + 2\gamma_{n}\left\langle \xi_{n+1},Z_{n}-m \right\rangle \right)^{p+1} \nonumber \\
 & = \left( V_{n} + \gamma_{n}^{2}\right)^{p+1}  +2(p+1) \gamma_{n} \left\langle \xi_{n+1}, Z_{n}-m \right\rangle \left( V_{n}+\gamma_{n}^{2}\right)^{p} \label{majzn1p1} \\
 & + \sum_{k=2}^{p+1}\binom{p+1}{k}\left( 2\gamma_{n} \left\langle \xi_{n+1},Z_{n}-m \right\rangle \right)^{k}\left( V_{n} + \gamma_{n}^{2}\right)^{p+1-k}. \label{majzn1p1-bis}
\end{align}
We shall upper bound the three terms in (\ref{majzn1p1}) and \eqref{majzn1p1-bis}. Applying Cauchy-Schwarz's inequality and since almost surely $\left\| \Phi (Z_{n}) \right\| \leq C \left\| Z_{n} - m \right\|$, 
\begin{align}
V_{n} & = \left\| Z_{n} - m \right\|^{2} -2 \gamma_{n} \left\langle Z_{n} - m , \Phi (Z_{n} ) \right\rangle + \gamma_{n}^{2}\left\| \Phi (Z_{n} ) \right\|^{2} \nonumber \\
& \leq \left\| Z_{n} - m \right\|^{2} + 2C\gamma_{n} \left\| Z_{n} - m \right\|^{2} + \gamma_{n}^{2}C^2\left\| Z_{n} - m \right\|^{2} \nonumber \\
& \leq \left( 1+c_{\gamma}C \right)^{2} \left\| Z_{n} - m \right\|^{2}.\label{major-V}
\end{align}

We now bound the expectation of the first term in (\ref{majzn1p1}). Indeed,
\begin{align*}
\mathbb{E}  \left[ \left( V_{n}+\gamma_{n}^{2}\right)^{p+1}\right]&  = \mathbb{E}\left[V_{n}^{p+1}\right] + (p+1)\gamma_{n}^{2}\mathbb{E}\left[ V_{n}^{p}\right] + \sum_{k=0}^{p-1}\binom{p+1}{k}\gamma_{n}^{2(p+1-k)}\mathbb{E}\left[ V_{n}^{k}\right] \\
& \leq \mathbb{E}\left[ V_{n}^{p+1}\right] + (p+1)\left( 1+c_{\gamma}C\right)^{2p}\gamma_{n}^{2}\mathbb{E}\left[ \left\| Z_{n}-m \right\|^{2p}\right] \\
& + \sum_{k=0}^{p-1}\binom{p+1}{k}\left( 1+c_{\gamma}C\right)^{2k}\gamma_{n}^{2(p+1-k)}\mathbb{E}\left[ \left\| Z_{n}-m \right\|^{2k}\right] .
\end{align*}
Applying inequality (\ref{vitlplem}), 
\begin{align*}
\sum_{k=0}^{p-1}  \binom{p+1}{k}\left( 1+c_{\gamma}C\right)^{2k}\gamma_{n}^{2(p+1-k)}\mathbb{E}\left[ \left\| Z_{n}-m \right\|^{2k}\right]  & \leq \sum_{k=0}^{p-1}\binom{p+1}{k}\left( 1+c_{\gamma}C\right)^{2k}\gamma_{n}^{2(p+1-k)} \frac{K_{k}}{n^{k\alpha}} \\
& \leq \sum_{k=0}^{p-1}\binom{p+1}{k}\frac{\left( 1+c_{\gamma}C\right)^{2k}c_{\gamma}^{2(p+1-k)}K_{k}}{n^{(2p+2-k)\alpha}} \\
& = O \left( \frac{1}{n^{(p+3)\alpha}}\right) .
\end{align*}
As a conclusion, there is a non-negative constant $A_{1}$ such that for all $n \geq 1$,
\begin{equation}\label{eq*}
\mathbb{E}\left[ \left( V_{n} + \gamma_{n}^{2}\right)^{p+1}\right] \leq \mathbb{E}\left[ V_{n}^{p+1}\right] + (p+1)\left( 1+c_{\gamma}C\right)^{2p}\gamma_{n}^{2}\mathbb{E}\left[ \left\| Z_{n}-m \right\|^{2p}\right] + \frac{A_{1}}{n^{(p+3)\alpha}}.
\end{equation}
We now bound the second term in (\ref{majzn1p1}). Using the facts that $\left( \xi_{n+1}\right)$ is a sequence of martingale differences adapted to the filtration $\left( \mathcal{F}_{n}\right)$ and that $Z_{n}$ is $\mathcal{F}_{n}$-measurable, 
\begin{equation}\label{eq**}
\mathbb{E}\left[ 2(p+1) \gamma_{n} \left\langle \xi_{n+1}, Z_{n}-m \right\rangle \left( V_{n}+\gamma_{n}^{2}\right)^{p} \right] =0 .
\end{equation}
Finally, we bound the last term in (\ref{majzn1p1-bis}), denoted by $(*)$. Since almost surely $\left\| \xi_{n} \right\| \leq 2$, applying Cauchy-Schwarz's inequality, 
\begin{align}\label{majotrespourrie}
\notag (*) & \leq  \sum_{k=2}^{p+1}\sum_{j=0}^{p+1-k}\binom{p+1}{k}\binom{p+1-k}{j}2^{k}\gamma_{n}^{2j+k}\left\| \xi_{n+1} \right\|^{k}\left\| Z_{n} - m \right\|^{k}V_{n}^{p+1-k-j} \\
& \leq   \sum_{k=2}^{p+1}\sum_{j=0}^{p+1-k}\binom{p+1}{k}\binom{p+1-k}{j}2^{2k}\gamma_{n}^{2j+k}\left\| Z_{n} - m \right\|^{k}V_{n}^{p+1-k-j}
\end{align}
Since almost surely $V_{n} \leq \left( 1+c_{\gamma}C\right)^{2}\left\| Z_{n}-m \right\|^{2}$ (see inequality \eqref{major-V}),
\begin{align}
\notag (*) & \leq \sum_{k=2}^{p+1}\sum_{j=0}^{p+1-k}\binom{p+1}{k}\binom{p+1-k}{j}2^{2k}\gamma_{n}^{2j+k}\left( 1+c_{\gamma}C\right)^{2p+2-2k-2j}\left\| Z_{n}-m \right\|^{2p+2-k-2j} \\
\label{eqmaj*}& = \sum_{k=2}^{p+1}\sum_{j=1}^{p+1-k}\binom{p+1}{k}\binom{p+1-k}{j}2^{2k}\gamma_{n}^{2j+k}\left( 1+c_{\gamma}C\right)^{2p+2-2k-2j}\left\| Z_{n}-m \right\|^{2p+2-k-2j} \\
\notag & + \sum_{k=3}^{p+1}\binom{p+1}{k}2^{2k}\gamma_{n}^{k}\left( 1+c_{\gamma}C\right)^{2p+2-2k-2j}\left\| Z_{n}-m\right\|^{2p+2-k} + 16\binom{p+1}{2}\gamma_{n}^{2}\left( 1+c_{\gamma}^{2}C^{2}\right)^{2p} \left\| Z_{n}-m \right\|^{2p} .
\end{align}
We bound the expectation of the two first terms on the right-hand side of (\ref{eqmaj*}). For the first one, applying Cauchy-Schwarz's inequality, 
\begin{align*}
 \mathbb{E} &   \left[ \sum_{k=2}^{p+1}\sum_{j=1}^{p+1-k}\binom{p+1}{k}\binom{p+1-k}{j}2^{2k}\gamma_{n}^{2j+k}\left( 1+c_{\gamma}C\right)^{2p+2-2k-2j}\left\| Z_{n}-m \right\|^{2p+2-k-2j} \right] \\
& \leq \sum_{k=2}^{p+1}\sum_{j=1}^{p+1-k}\binom{p+1}{k}\binom{p+1-k}{j} \\
 & 2^{2k}\gamma_{n}^{2j+k}\left(1+c_{\gamma}C\right)^{2p+2-2k-2j}\sqrt{\mathbb{E}\left[ \left\| Z_{n} - m \right\|^{2(p-j)}\right] \mathbb{E}\left[ \left\| Z_{n} - m \right\|^{2(p-k-j+2)}\right]}.
\end{align*}
Applying inequality (\ref{vitlplem}), 
\begin{align*}
\mathbb{E} & \left[ \sum_{k=2}^{p+1}\sum_{j=1}^{p+1-k}\binom{p+1}{k}\binom{p+1-k}{j}2^{2k}\gamma_{n}^{2j+k}\left(1+c_{\gamma}C\right)^{2p+2-2k-2j}\left\| Z_{n}-m \right\|^{2p+2-k-2j} \right] \\
& \leq \sum_{k=2}^{p+1}\sum_{j=1}^{p+1-k}\binom{p+1}{k}\binom{p+1-k}{j}2^{2k}\gamma_{n}^{2j+k}\left(1+c_{\gamma}C\right)^{2p+2-2k-2j}\frac{\sqrt{K_{p-j}}}{n^{\frac{p-j}{2}\alpha}}\frac{\sqrt{K_{p-k-j+2}}}{n^{\frac{p-k-j+2}{2}\alpha}} \\
& = o \left( \frac{1}{n^{(p+2)\alpha}}\right).
\end{align*}
Similarly, for the second term on the right-hand side of (\ref{eqmaj*}), applying Cauchy-Schwarz's inequality, let
\begin{align*}
(**) & :=  \sum_{k=3}^{p+1}  \binom{p+1}{k}2^{2k}\gamma_{n}^{k}\left( 1+c_{\gamma}C\right)^{2p+2-2k}\mathbb{E}\left[\left\| Z_{n}-m\right\|^{2p+2-k} \right] \\
 &  \leq  \sum_{k=4}^{p+1}\binom{p+1}{k}2^{2k}\gamma_{n}^{k}\left( 1+c_{\gamma}C\right)^{2p+2-2k}\mathbb{E}\left[\left\| Z_{n}-m\right\|^{2p+2-k} \right] \\.
 & + 64\binom{p+1}{3}\left( 1+c_{\gamma}C\right)^{2p-4}\mathbb{E}\left[ \left\| Z_{n}-m \right\|^{2p-1}\right] \\
& \leq \sum_{k=4}^{p+1}\binom{p+1}{k}2^{2k}\gamma_{n}^{k}\left( 1+c_{\gamma}C \right)^{2p+2-2k} \sqrt{\mathbb{E}\left[\left\| Z_{n}-m\right\|^{2(p+3-k)} \right]\mathbb{E}\left[ \left\| Z_{n} - m \right\|^{2(p-1)}\right]}\\
& + 64\binom{p+1}{3}\left( 1+c_{\gamma}C\right)^{2p-4}\mathbb{E}\left[ \left\| Z_{n}-m \right\|^{2p-1}\right] .
\end{align*}
Applying Lemma~\ref{lemtechnique} and inequality (\ref{vitlplem})
\begin{align*}
(**) & \leq \sum_{k=4}^{p+1}\binom{p+1}{k}2^{2k}\gamma_{n}^{k}\left( 1+c_{\gamma}C\right)^{2p+2-2k}\frac{\sqrt{ K_{p+3-k}K_{p-1}}}{n^{(p+1-k/2)\alpha}} \\
  &  + 32\binom{p+1}{3}\left( 1+c_{\gamma}C\right)^{2p-4}\gamma_{n}^{3}\left( \mathbb{E}\left[ \left\| Z_{n}-m \right\|^{2p}\right] + \mathbb{E}\left[\left\| Z_{n}-m \right\|^{2p-2}\right]\right) \\
  & = O \left( \frac{1}{n^{(p+2)\alpha}}\right) + 32\binom{p+1}{3}\left( 1+c_{\gamma}C\right)^{2p-4}\gamma_{n}^{3} \mathbb{E}\left[ \left\| Z_{n}-m \right\|^{2p}\right] .
\end{align*}
Finally, let us denote by $(***)$ the expectation of the term in (\ref{majzn1p1-bis}), there is a positive constant $A_{2}$ such that for all $n \geq 1$,
\begin{equation}\label{eq***}
(***) \leq \frac{A_{2}}{n^{(p+2)\alpha}} + 16 \binom{p+1}{2}\left( 1+c_{\gamma}C\right)\gamma_{n}^{2} \mathbb{E}\left[ \left\| Z_{n}-m \right\|^{2p}\right] + 32\binom{p+1}{3}\left( 1+c_{\gamma}C\right)^{2p-4}\gamma_{n}^{3} \mathbb{E}\left[ \left\| Z_{n}-m \right\|^{2p}\right].
\end{equation}
Applying inequalities (\ref{eq*}),(\ref{eq**}) and (\ref{eq***}), there are positive constants $C_{1}'',C_{2}'$ such that for all $n \geq 1$,
\begin{equation}\label{eq1}
\mathbb{E}\left[ \left\| Z_{n+1}-m \right\|^{2p+2}\right] \leq \mathbb{E}\left[ V_{n}^{p+1}\right] + \frac{C_{1}''}{n^{(p+2)\alpha}}+ C_{2}'\gamma_{n}^{2}\mathbb{E}\left[ \left\| Z_{n}-m \right\|^{2p}\right] .
\end{equation}
In order to conclude, we need to bound $\mathbb{E}\left[ V_{n}^{p+1}\right] $. Applying Lemma 5.2 in \cite{CCG2015}, there are a positive constant $c$ and a rank $n_{\alpha}$ such that for all $n \geq n_{\alpha}$,
\begin{equation}\label{eq2}
\mathbb{E}\left[ V_{n}^{p+1}\mathbb{1}_{\left\lbrace\left\| Z_{n}-m \right\| \leq cn^{1-\alpha}\right\rbrace} \right] \leq \left( 1 - \frac{2}{n}\right)^{p+1}\mathbb{E}\left[ \left\| Z_{n}-m \right\|^{2p+2}\right] .
\end{equation}
Finally, since there is a positive constant $c_{0}$ such that almost surely $\left\| Z_{n} - m \right\| \leq c_{0}n^{1-\alpha}$ and since almost surely $V_{n} \leq \left( 1+c_{\gamma}C\right)^{2}\left\| Z_{n}-m \right\|^{2}$, 
\begin{align*}
\mathbb{E}\left[ V_{n}^{p+1} \mathbb{1}_{\left\lbrace\left\| Z_{n} - m \right\| \geq cn^{1-\alpha}\right\rbrace}\right] & \leq \left( 1+c_{\gamma}C\right)^{2p+2}\mathbb{E}\left[ \left\| Z_{n} - m \right\|^{2p+2}\mathbb{1}_{\left\lbrace\left\| Z_{n} - m \right\| \geq cn^{1-\alpha}\right\rbrace}\right] \\
& \leq \left( 1+c_{\gamma}C \right)^{2p+2}c_{0}^{2p+2}n^{(2p+2)(1-\alpha)}\mathbb{E}\left[ \mathbb{1}_{\left\lbrace\left\| Z_{n} - m \right\| \geq cn^{1-\alpha}\right\rbrace} \right] \\
& = \left( 1+c_{\gamma}C\right)^{2p+2}c_{0}^{2p+2}n^{(2p+2)(1-\alpha)}\mathbb{P}\left( \left\| Z_{n} - m \right\| \geq cn^{1-\alpha}\right).
\end{align*}
Applying inequality (\ref{majpourproba}) and Markov's inequality,
\begin{align*}
\mathbb{E}\left[ V_{n}^{p+1} \mathbb{1}_{\left\lbrace\left\| Z_{n} - m \right\| \geq cn^{1-\alpha}\right\rbrace}\right] & \leq \left( 1+c_{\gamma}C\right)^{2p+2}c_{0}^{2p+2}n^{(2p+2)(1-\alpha)} \frac{\mathbb{E}\left[ \left\| Z_{n} - m \right\|^{2q} \right]}{(cn)^{2q(1-\alpha)}} \\
& \leq \frac{\left( 1+c_{\gamma}C\right)^{2p+2}c_{0}^{2p+2}n^{(2p+2)(1-\alpha)}}{c^{2q(1-\alpha)}} \frac{M_{q}}{n^{2q(1-\alpha)}} \\
& = O \left( \frac{1}{n^{2q(1-\alpha) - (2p+2)(1-\alpha)}}\right).
\end{align*}
Taking $q \geq p+1 + \frac{(p+2) \alpha}{2(1-\alpha)}$,
\begin{equation}\label{eq3}
\mathbb{E}\left[ V_{n}^{p+1}\mathbb{1}_{\left\lbrace\left\| Z_{n} - m \right\| \geq cn^{1-\alpha}\right\rbrace}\right] = O \left( \frac{1}{n^{(p+2)\alpha}}\right) .
\end{equation}
Finally, using inequalities (\ref{eq1}) to (\ref{eq3}), there is a positive constant $C_{1}'$ such that for all $n \geq n_{\alpha}$,
\begin{equation}
\mathbb{E}\left[ \left\| Z_{n+1} - m \right\|^{2p+2} \right] \leq \left( 1-\frac{2}{n}\right)^{p+1}\mathbb{E}\left[ \left\| Z_{n}-m \right\|^{2p+2}\right] + \frac{C_{1}'}{n^{(p+2)\alpha}} + C_{2}'\gamma_{n}^{2}\mathbb{E}\left[ \left\| Z_{n} - m \right\|^{2p}\right] .
\end{equation}

\end{proof}
\begin{proof}[Proof of Lemma \ref{majznp}] Since the eigenvalues of $\Gamma_{m}$ belong to $[ \lambda_{\min} , C]$, there are a rank $n_{\alpha}$ and a positive constant $c'$ such that for all $n \geq n_{\alpha}$, we have $\left\| I_{H} - \gamma_{n}\Gamma_{m} \right\|_{op} \leq  1-\lambda_{\min}\gamma_{n} $ and $ 0 \leq \left(1-\lambda_{\min}\gamma_{n}\right)^{2} + 4C^{2}\gamma_{n}^{2} \leq 1-c'\gamma_{n}$. Using decomposition (\ref{decdelta}) and Cauchy-Schwarz's inequality, since $\left\| \delta_{n} \right\| \leq 2C \left\| Z_{n}-m \right\| $ and $\left\| \xi_{n+1} \right\|^{2} - 2\left\langle \Phi (Z_{n} ) , \xi_{n+1} \right\rangle \leq 1$, we have for all $n \geq n_{\alpha}$,
\begin{align}\label{eq1'}
\notag \left\| Z_{n+1}-m \right\|^{2} & \leq \left( 1- \lambda_{\min}\gamma_{n}\right)^{2}\left\| Z_{n}-m \right\|^{2} +2\gamma_{n} \left\langle \xi_{n+1}, Z_{n}-m-\gamma_{n}\Phi (Z_{n}) \right\rangle \\
\notag & - 2\gamma_{n}\left\langle \left( I_{H} - \gamma_{n}\Gamma_{m}\right) \left( Z_{n}-m \right) , \delta_{n} \right\rangle + \gamma_{n}^{2}\left\| \delta_{n} \right\|^{2} + \gamma_{n}^{2}\left\| \xi_{n+1} \right\|^{2} \\
& \leq \left( 1-c'\gamma_{n}\right)\left\| Z_{n}-m \right\|^{2} + 2\gamma_{n}\left\| Z_{n}-m \right\| \left\| \delta_{n} \right\| + \gamma_{n}^{2} + 2\gamma_{n}\left\langle Z_{n}-m , \xi_{n+1} \right\rangle .
\end{align}
Thus, for all integers $p \geq 1$ and $n \geq n_{\alpha}$,
\begin{align}\label{equation+++}
\notag \mathbb{E}\left[ \left\| Z_{n+1} - m \right\|^{2p} \right] & \leq \left( 1-c'\gamma_{n} \right) \mathbb{E}\left[ \left\| Z_{n} - m \right\|^{2}\left\| Z_{n+1}-m \right\|^{2p-2}\right] + 2\gamma_{n}\mathbb{E}\left[ \left\| Z_{n} - m \right\| \left\| \delta_{n} \right\| \left\| Z_{n+1}- m \right\|^{2p-2}\right] \\
& +\gamma_{n}^{2}\mathbb{E}\left[ \left\| Z_{n+1} - m \right\|^{2p-2}\right] +2\gamma_{n} \mathbb{E}\left[ \left\langle Z_{n} - m , \xi_{n+1} \right\rangle \left\| Z_{n+1}-m \right\|^{2p-2}\right] .
\end{align}
In order to bound each term in previous inequality, we give a new upper bound of $\left\| Z_{n+1}-m \right\|^{2p-2}$. By convexity of $G$, we have almost surely $V_n \leq \left\| Z_{n}-m \right\|^{2} + \gamma_{n}^{2}$, and inequality (\ref{majzn1p1}) can be written as
\begin{align*}
\left\| Z_{n+1}-m\right\|^{2p-2} & \leq \left( \left\| Z_{n}-m \right\|^{2} + \gamma_{n}^{2}\right)^{p-1}  +2(p-1) \gamma_{n} \left\langle \xi_{n+1}, Z_{n}-m \right\rangle \left( \left\| Z_{n}-m \right\|^{2}+\gamma_{n}^{2}\right)^{p-2} \\
  & + \sum_{k=2}^{p-1}\binom{p-1}{k}\left| 2\gamma_{n} \left\langle \xi_{n+1},Z_{n}-m \right\rangle \right|^{k}\left( \left\| Z_{n}-m  \right\|^{2} + \gamma_{n}^{2}\right)^{p-1-k}. 
\end{align*}
Applying Cauchy-Schwarz's inequality, since $\| \xi_{n+1} \| \leq 2$, 
\begin{align}\label{eq2'}
\left\| Z_{n+1}-m\right\|^{2p-2} & \leq \left( \left\| Z_{n}-m \right\|^{2} + \gamma_{n}^{2}\right)^{p-1}  +2(p-1) \gamma_{n} \scal{\xi_{n+1},Z_{n}-m} \left( \left\| Z_{n}-m \right\|^{2}+\gamma_{n}^{2}\right)^{p-2} \\
\notag  & + \sum_{k=2}^{p-1}\binom{p-1}{k}2^{2k}\gamma_{n}^{k}\left\| Z_{n}-m \right\|^{k}\left( \left\| Z_{n}-m  \right\|^{2} + \gamma_{n}^{2}\right)^{p-1-k}. 
\end{align}
Note that if $p \leq 2$, the last term on the right-hand side of previous inequality is equal to $0$. Applying previous inequality, we can now bound each term in inequality (\ref{equation+++}). 
\newline

\textbf{Step 1: Bounding $\left( 1-c'\gamma_{n} \right)\mathbb{E}\left[ \left\| Z_{n} - m \right\|^{2}\left\| Z_{n+1}-m \right\|^{2p-2} \right]$.} 

We will bound each term which appears when we multiply $ \left( 1-c'\gamma_{n} \right)\left\| Z_{n} - m \right\|^{2}$ by the bound given by inequality (\ref{eq2'}). First, applying inequalities (\ref{vitlplem}), 
\begin{align*}
\mathbb{E} & \left[ \left( 1-c'\gamma_{n} \right) \left\| Z_{n}-m \right\|^{2}\left( \left\| Z_{n} - m \right\|^{2} + \gamma_{n}^{2}\right)^{p-1}\right] \\
& = \left( 1-c'\gamma_{n}\right) \mathbb{E}\left[ \left\| Z_{n}-m \right\|^{2p}\right] + \sum_{k=0}^{p-2}\binom{p-1}{k}\left( 1-c'\gamma_{n}\right) \gamma_{n}^{2(p-1-k)}\mathbb{E}\left[ \left\| Z_{n}-m \right\|^{2k+2}\right] \\
& \leq \left( 1-c'\gamma_{n}\right) \mathbb{E}\left[ \left\| Z_{n}-m \right\|^{2p}\right] + \sum_{k=0}^{p-2}\binom{p-1}{k}\left( 1-c'\gamma_{n}\right)c_{\gamma}^{2(p-1-k)}\frac{K_{k+1}}{n^{(2p-1-k)\alpha}}.
\end{align*}
Since for all $k \leq p-2$, we have $2p-1-k \geq p+1$, there is a positive constant $B_{1}$ such that for all $n \geq n_{\alpha}$,
\begin{equation}
\mathbb{E}  \left[ \left( 1-c'\gamma_{n} \right) \left\| Z_{n}-m \right\|^{2}\left( \left\| Z_{n} - m \right\|^{2} + \gamma_{n}^{2}\right)^{p-1}\right] \leq \left( 1-c'\gamma_{n}\right) \mathbb{E}\left[ \left\| Z_{n}-m \right\|^{2p}\right] + \frac{B_{1}}{n^{(p+1)\alpha}} .
\end{equation}
Moreover, using the facts that $\left( \xi_{n}\right)$ is a martingale differences sequence adapted to the filtration $\left( \mathcal{F}_{n}\right)$, and that $Z_{n}$ is $\mathcal{F}_{n}$-measurable, 
\begin{equation}
\mathbb{E}\left[ \left( 1-c'\gamma_{n}\right)\left\| Z_{n}-m \right\|^{2}2\left( p-1 \right) \gamma_{n} \left\langle \xi_{n+1}, Z_{n}-m \right\rangle \left( \left\| Z_{n}-m \right\|^{2} + \gamma_{n}^{2}\right)^{p-2}\right] = 0 .
\end{equation}
We can now suppose that $p \geq 3$, since otherwise the last term in inequality (\ref{eq2'}) is equal to $0$. Let
\begin{align*}
(\star )  & := \left( 1-c'\gamma_{n} \right) \mathbb{E}\left[ \left\| Z_{n}-m \right\|^{2} \sum_{k=2}^{p-1}\binom{p-1}{k}2^{2k}\gamma_{n}^{k}\left\| Z_{n}-m \right\|^{k} \left( \left\| Z_{n} - m \right\|^{2} + \gamma_{n}^{2}\right)^{p-1-k}\right] \\
& \leq \left( 1-c'\gamma_{n}\right)\sum_{k=2}^{p-1}\binom{p-1}{k}2^{p-2+k}\gamma_{n}^{k}\left( \mathbb{E}\left[ \left\| Z_{n} - m \right\|^{2p -k}\right] + \gamma_{n}^{2(p-1-k)}\mathbb{E}\left[ \left\| Z_{n}-m \right\|^{k+2} \right] \right) .
\end{align*}
Applying Cauchy-Schwarz's inequality,
\begin{align}
\notag  (\star ) & \leq \left( 1-c'\gamma_{n}\right)\sum_{k=2}^{p-1}\binom{p-1}{k}2^{p-2+k}\gamma_{n}^{k} \\
\notag & \left( \sqrt{\mathbb{E}\left[ \left\| Z_{n}-m \right\|^{2(p-1)}\right]\mathbb{E}\left[ \left\| Z_{n} - m \right\|^{2(p+1-k)}\right]} + \gamma_{n}^{2(p-1-k)}\sqrt{\mathbb{E}\left[ \left\| Z_{n} - m \right\|^{2k}\right] \mathbb{E}\left[ \left\| Z_{n} - m \right\|^{4}\right]} \right)
\end{align}
Finally, applying inequality (\ref{vitlplem}),
\begin{align}\label{calculpourri}
\notag (\star)  & \leq \left( 1-c'\gamma_{n}\right)\sum_{k=2}^{p-1}\binom{p-1}{k}2^{p-2+k}\gamma_{n}^{k}\left( \frac{\sqrt{K_{p-1}K_{p+1-k}}}{n^{(p-k/2)\alpha}} + \gamma_{n}^{2(p-1-k)}\frac{\sqrt{K_{k}K_{2}}}{n^{\frac{(k +2)\alpha}{2}}} \right) \\
& = O \left( \frac{1}{n^{(p+1)\alpha}}\right),
\end{align}
because for all $2 \leq k \leq p-1$ and $p \geq 3$, we have $ p +k/2 \geq p+1$ and $ 2p-\frac{1}{2}k -1 \geq p+1$. Thus, there is a positive constant $B_{1}'$ such that for all $n \geq n_{\alpha}$,
\begin{equation}\label{eq1''}
\mathbb{E}\left[ \left( 1-c'\gamma_{n}\right)\left\| Z_{n}-m \right\|^{2}\left\| Z_{n+1}-m \right\|^{2p-2}\right]  \leq \left( 1-c'\gamma_{n}\right) \mathbb{E}\left[ \left\| Z_{n} -m \right\|^{2p}\right] + \frac{B_{1}'}{n^{(p+1)\alpha}}.
\end{equation}

\textbf{Step 2: Bounding $2\gamma_{n}\mathbb{E}\left[ \left\langle \xi_{n+1},Z_{n} - m \right\rangle \left\| Z_{n+1} - m \right\|^{2p-2}\right]$.}

Applying the fact that $\left( \xi_{n}\right) $ is a martingale differences sequence adapted to the filtration $\left( \mathcal{F}_{n} \right)$ and applying inequality (\ref{eq2'}), let
\begin{align*}
(\star \star ) & :=  \mathbb{E}\left[ 2\gamma_{n}\left\langle \xi_{n+1} , Z_{n}-m \right\rangle \left\| Z_{n+1}-m \right\|^{2p-2} \right] \\
 & \leq 4(p-1)\gamma_{n}^{2}\mathbb{E}\left[ \left\langle \xi_{n+1}, Z_{n}-m \right\rangle^{2}\left( \left\| Z_{n}-m \right\|^{2}+\gamma_{n}^{2}\right)^{p-2}\right] \\
 \end{align*}
Since $\| \xi_{n+1}\| \leq 2$ and applying Cauchy-Schwarz's inequality, 
 \begin{align*}
(\star \star ) & \leq 4(p-1) \gamma_{n}^{2}\mathbb{E}\left[ \left( \left\| \xi_{n+1} \right\| \left\| Z_{n} - m \right\| \right)^{2}\left( \left\| Z_{n} - m \right\|^{2} +  \gamma_{n}^{2} \right)^{p-2}\right] \\
& \leq 16(p-1) \gamma_{n}^{2}\mathbb{E}\left[ \left\| Z_{n}-m \right\|^{2}\left( \left\| Z_{n}-m \right\|^{2}+\gamma_{n}^{2}\right)^{p-2}\right] .
\end{align*}
With the help of Lemma \ref{lemtechnique},
\[
(\star \star )  \leq 2^{p+2}(p-1) \gamma_{n}^{2}\left( \mathbb{E}\left[ \left\| Z_{n} - m \right\|^{2(p-1)}\right] + \gamma_{n}^{2(p-2)}\mathbb{E}\left[ \left\| Z_{n}-m \right\|^{2}\right] \right) ,
\]
Applying previous inequality and inequality (\ref{vitlplem}), there is a positive constant $B_{2}'$ such that
\begin{equation}
\label{eq2''} \mathbb{E}\left[ 2\gamma_{n}\left\langle \xi_{n+1} , Z_{n}-m \right\rangle \left\| Z_{n+1}-m \right\|^{2p-2} \right] \leq \frac{B_{2}'}{n^{(p+1)\alpha}} .
\end{equation}

\textbf{Step 3: Bounding $\gamma_{n}^{2}\mathbb{E}\left[ \left\| Z_{n+1}-m \right\|^{2p-2}\right]$.} 

Applying inequality (\ref{vitlplem}), 
\begin{align}
\label{eq3''}  \notag \gamma_{n}^{2}\mathbb{E}\left[ \left\| Z_{n+1}-m \right\|^{2p-2}\right] & \leq \gamma_{n}^{2} \frac{K_{p-1}}{(n+1)^{p-1}} \\
& = O \left( \frac{1}{n^{(p+1)\alpha}}\right) .
\end{align}

\textbf{Step 4: Bounding $2\gamma_{n}\mathbb{E}\left[ \left\| Z_{n} - m \right\| \left\| \delta_{n} \right\| \left\| Z_{n+1}-m \right\|^{2p-2}\right].$} 

As in step 1, we will bound each term which appears when we multiply $ 2\gamma_{n}\left\| Z_{n} - m \right\| \left\| \delta_{n} \right\|$ by the bound given by inequality (\ref{eq2'}). Since almost surely $\left\| \delta_{n} \right\| \leq 2C \left\| Z_{n}-m \right\|$, applying inequality (\ref{calculpourri}), one can check
\begin{align*}
2\gamma_{n} & \mathbb{E}\left[ \left\| Z_{n}-m \right\| \left\| \delta_{n} \right\| \sum_{k=2}^{p-1}\binom{p-1}{k}2^{2k}\gamma_{n}^{k}\left\| Z_{n}-m \right\|^{k} \left( \left\| Z_{n} - m \right\|^{2} + \gamma_{n}^{2}\right)^{p-1-k}\right] \\
&  \leq 4C\gamma_{n}   \mathbb{E}\left[ \left\| Z_{n}-m \right\|^{2} \sum_{k=2}^{p-1}\binom{p-1}{k}2^{2k}\gamma_{n}^{k}\left\| Z_{n}-m \right\|^{k} \left( \left\| Z_{n} - m \right\|^{2} + \gamma_{n}^{2}\right)^{p-1-k}\right] \\
& = o \left( \frac{1}{n^{(p+1)\alpha}}\right) .
\end{align*}
Moreover, since $\left( \xi_{n}\right)$ is a martingale differences sequence adapted to the filtration $\left( \mathcal{F}_{n} \right)$, 
\begin{equation}
\notag \mathbb{E}\left[ 2\gamma_{n} \left\| Z_{n}-m \right\| \left\| \delta_{n} \right\| 2\left( p-1 \right) \gamma_{n} \left\langle \xi_{n+1}, Z_{n}-m \right\rangle \left( \left\| Z_{n}-m \right\|^{2} + \gamma_{n}^{2}\right)^{p-2}\right] =0 .
\end{equation}
Finally, since almost surely  $\left\| \delta_{n} \right\| \leq C_{m}\left\| Z_{n} - m \right\|^{2}$ and $\left\| \delta_{n} \right\| \leq 2C \left\| Z_{n} - m \right\|$, applying Lemma~\ref{lemtechnique}, 
\begin{align*}
 (\star \star \star )  & := \mathbb{E}  \left[ 2\gamma_{n}\left\| Z_{n}-m \right\| \left\| \delta_{n} \right\|\left( \left\| Z_{n} - m \right\|^{2} + \gamma_{n}^{2}\right)^{p-1}\right] \\
 & \leq 2^{p-1}\gamma_{n}\mathbb{E}\left[ \left\| Z_{n} - m \right\|^{2p-1}\left\| \delta_{n} \right\| \right] + 2^{p-1}\gamma_{n}^{2p-1}\mathbb{E}\left[ \left\| Z_{n} - m \right\| \left\| \delta_{n} \right\| \right] \\
 & \leq 2^{p-1}C_{m}\gamma_{n} \mathbb{E}\left[ \left\| Z_{n}-m \right\|^{2p+1}\right]+  2^{p-1}C\gamma_{n}^{2p-1}\mathbb{E}\left[ \left\| Z_{n}-m \right\|^{2} \right] .
\end{align*}
Applying Lemma~\ref{lemtechnique},
\[
(\star \star \star)  \leq \frac{1}{2}c'\gamma_{n}\mathbb{E}\left[ \left\| Z_{n}-m \right\|^{2p}\right] + 2^{2p-2}\frac{C_{m}^{2}}{c'}\gamma_{n}\mathbb{E}\left[ \left\| Z_{n}-m \right\|^{2p+2}\right]   + O \left( \frac{1}{n^{(p+1)\alpha}}\right) .
\]
Thus, there are positive constants $B_{3}',B_{4}'$ such that
\begin{equation}
\label{eq4''} 2\mathbb{E}\left[ \left\| Z_{n}-m \right\| \left\| \delta_{n} \right\| \left\| Z_{n+1}-m \right\|^{2p-2}\right] \leq \frac{1}{2}c'\gamma_{n}\mathbb{E}\left[ \left\| Z_{n} - m \right\|^{2p}\right] + B_{3}'\gamma_{n}\mathbb{E}\left[ \left\| Z_{n} - m \right\|^{2p+2}\right] + \frac{B_{4}'}{n^{(p+1)\alpha}} .
\end{equation}

\textbf{Step 5: Conclusion.} Taking $c_{0} = \frac{1}{2}c'$, applying inequalities (\ref{eq1''}),(\ref{eq2''}),(\ref{eq3''}) and (\ref{eq4''}), there are positive constants $C_{1},C_{2}$ such that for all $n \geq n_{\alpha}$,
\begin{equation}\notag
\mathbb{E}\left[ \left\| Z_{n+1} - m \right\|^{2p}\right] \leq \left( 1-c_{0}\gamma_{n} \right) \mathbb{E}\left[ \left\| Z_{n} - m \right\|^{2p}\right] + \frac{C_{1}}{n^{(p+1)\alpha}}+ C_{2}\gamma_{n}\mathbb{E}\left[ \left\| Z_{n} - m \right\|^{2p+2}\right] .
\end{equation}

\end{proof}
\begin{proof}[Proof of Theorem \ref{vitconvrm}] We prove with the help of a complete induction that for all $p \geq 1$, and for all $\beta \in (\alpha , \frac{p+2}{p}\alpha - \frac{1}{p})$, there are positive constants $K_{p},C_{\beta ,p}$ such that for all $n \geq 1$,
\begin{align*}
& \mathbb{E}\left[ \left\| Z_{n} - m \right\|^{2p}\right] \leq \frac{K_{p}}{n^{p\alpha}} ,& \mathbb{E}\left[ \left\| Z_{n} - m \right\|^{2p+2}\right] \leq \frac{C_{\beta , p}}{n^{\beta p}} .
\end{align*}
This result is proven in \cite{CCG2015} for $p=1$. Let $p \geq 2$ and let us suppose from now that for all integer $k \leq p-1$, there are positive constant $K_{k}$ such that for all $n \geq 1$,
\begin{equation}
\mathbb{E}\left[ \left\| Z_{n} - m \right\|^{2k}\right] \leq \frac{K_{k}}{n^{k\alpha}}.
\end{equation}
We now split the end of the proof into two steps.

\medskip
\textbf{Step 1: Calibration of the constants.}

In order to simplify the demonstration thereafter, we introduce some constants and notations. Let $\beta$ be a constant such that $\frac{p+2}{p}\alpha - \frac{1}{p} >\beta > \alpha$ and let $K_{p}', K_{p,\beta}'$ be constants such that $K_{p}' \geq 2^{1+p\alpha} C_{1}c_{0}^{-1}c_{\gamma}^{-1}$, ($C_{1}$ is defined in Lemma \ref{majznp}),  and $2K_{p}' \geq K_{p,\beta}' \geq K_{p}' \geq 1$. By definition of $\beta $, there is a rank $n_{p,\beta} \geq n_{\alpha}$ ($n_{\alpha}$ is defined in Lemma \ref{majznp} and in Lemma \ref{majznpp}) such that for all $n \geq n_{p,\beta}$,
\begin{align}\label{defnbeta}
\notag \left( 1-c_{0}\gamma_{n} \right)\left( \frac{n+1}{n}\right)^{p\alpha} + \frac{1}{2}c_{0}\gamma_{n} + \frac{2^{\alpha + \beta p +1}c_{\gamma}C_{2}}{(n+1)^{\alpha +(\beta - \alpha )p}} \leq 1 , \\
\left( 1- \frac{2}{n}\right)^{p+1} \left( \frac{n+1}{n}\right)^{p \beta} + \left( C_{1}' + C_{2}'c_{\gamma}^{2}\right) 2^{(p+2)\alpha}\frac{1}{(n+1)^{(p+2) \alpha - p \beta }} \leq 1 ,
\end{align}
with $C_{2}$ defined in Lemma \ref{majznp} and $C_{1}',C_{2}'$  are defined in Lemma \ref{majznpp}. Because $\beta > \alpha$,
\begin{align*}
\left( 1-c_{0}\gamma_{n} \right)\left( \frac{n+1}{n}\right)^{p\alpha} + \frac{1}{2}c_{0}\gamma_{n} + \frac{2^{\alpha + \beta p +1}c_{\gamma}C_{2}}{(n+1)^{\alpha +(\beta - \alpha )p}} & = 1-c_{0}\gamma_{n} + o \left( \frac{1}{n}\right) + \frac{1}{2}c_{0}\gamma_{n} + O \left( \frac{1}{n^{\alpha + (\beta - \alpha )p}}\right) \\
& = 1 - \frac{1}{2}c_{0}\gamma_{n} + o \left( \frac{1}{n^{\alpha}}\right).  
\end{align*}
In the same way, since $\beta < \frac{p+2}{p}\alpha - \frac{1}{p}$, $p\beta  < 2p+2$ and
\begin{align*}
\left( 1- \frac{2}{n}\right)^{p+1} \left( \frac{n+1}{n}\right)^{p \beta} &  + \left( C_{1}' + C_{2}'c_{\gamma}^{2}\right) 2^{(p+2)\alpha}\frac{1}{(n+1)^{(p+2) \alpha - p \beta }}  = 1- \left( 2p+2 -p\beta \right)\frac{1}{n} + o \left( \frac{1}{n}\right) .
\end{align*}

\medskip

\textbf{Step 2: The induction.}

Let us take $K_{p}' \geq n_{p,\beta}^{p\alpha}\mathbb{E}\left[ \left\| Z_{n_{p,\beta}} - m \right\|^{2p}\right]$ and  $K_{p,\beta}' \geq n_{p,\beta}^{p\alpha}\mathbb{E}\left[ \left\| Z_{n_{p,\beta}} - m \right\|^{2p+2}\right]$, we will prove by induction that for all $n \geq n_{p,\beta}$, 
\begin{align*}
& \mathbb{E}\left[ \left\| Z_{n} - m \right\|^{2p}\right]\leq \frac{K_{p}'}{n^{p\alpha}}, & \mathbb{E}\left[ \left\| Z_{n} - m \right\|^{2p+2}\right] \leq \frac{K_{p,\beta}'}{n^{p\beta}} .
\end{align*}
Applying Lemma \ref{majznp} and by induction, since $2K_{p}' \geq K_{p, \beta}' \geq K_{p}' \geq 1$,
\begin{align*}
\mathbb{E}\left[ \left\| Z_{n+1} - m \right\|^{2p}\right] & \leq \left( 1-c_{0}\gamma_{n}\right)\mathbb{E}\left[ \left\| Z_{n} - m \right\|^{2p}\right] + \frac{C_{1}}{n^{(p+1)\alpha}} + C_{2}\gamma_{n}\mathbb{E}\left[ \left\| Z_{n} - m \right\|^{2p+2}\right] \\
& \leq \left( 1-c_{0}\gamma_{n}\right) \frac{K_{p}'}{n^{p\alpha}} + \frac{C_{1}}{n^{(p+1)\alpha}} + C_{2}\gamma_{n}\frac{K_{p,\beta}'}{n^{p\beta}} \\
& \leq \left( 1-c_{0}\gamma_{n} \right) \frac{K_{p}'}{n^{p\alpha}} + \frac{C_{1}}{n^{(p+1)\alpha}} + 2C_{2}\gamma_{n} \frac{K_{p}'}{n^{p\beta}}. 
\end{align*}
Factorizing by $\frac{K_{p}'}{(n+1)^{p \alpha }}$, 
\begin{align*}
\mathbb{E}\left[ \left\| Z_{n+1} - m \right\|^{2p}\right] & \leq \left( 1-c_{0}\gamma_{n} \right) \left( \frac{n+1}{n}\right)^{p\alpha}\frac{K_{p}'}{(n+1)^{p\alpha}} + \left( \frac{n+1}{n}\right)^{p\alpha}C_{1}\frac{1}{(n+1)^{p\alpha}n^{\alpha}} \\
&  + 2c_{\gamma}C_{2}\left( \frac{n+1}{n}\right)^{\alpha + \beta p}\frac{K_{p}'}{(n+1)^{\beta p + \alpha}} \\
& \leq \left( 1-c_{0}\gamma_{n} \right) \left( \frac{n+1}{n}\right)^{p\alpha}\frac{K_{p}'}{(n+1)^{p\alpha}} + \frac{2^{p\alpha}C_{1}c_{\gamma}^{-1}\gamma_{n}}{(n+1)^{p\alpha}} +\frac{2^{\alpha +\beta p +1}c_{\gamma}C_{2}}{(n+1)^{\alpha +(\beta - \alpha )p}}\frac{K_{p}'}{(n+1)^{p\alpha }}.
\end{align*}
Since $K_{p}' \geq 2^{1+ p\alpha}C_{1}c_{\gamma}^{-1}c_{0}^{-1}$,
\begin{align*}
\mathbb{E}\left[ \left\| Z_{n+1}-m \right\|^{2p}\right] & \leq \left( 1-c_{0}\gamma_{n}\right)\left( \frac{n+1}{n}\right)^{p\alpha}\frac{K_{p}'}{(n+1)^{p\alpha}} + \frac{1}{2}\gamma_{n}c_{0}\frac{K_{p}'}{(n+1)^{p\alpha}} + \frac{2^{\alpha + \beta p +1}c_{\gamma}C_{2}}{(n+1)^{\alpha + (\beta - \alpha)p}}\frac{K_{p}'}{(n+1)^{p\alpha}} \\
& \leq \left( \left( 1-c_{0}\gamma_{n} \right)\left( \frac{n+1}{n}\right)^{p\alpha} + \frac{1}{2}c_{0}\gamma_{n} + \frac{2^{\alpha + \beta p +1}c_{\gamma}C_{2}}{(n+1)^{\alpha +(\beta - \alpha )p}}\right)\frac{K_{p}'}{(n+1)^{p\alpha}}.
\end{align*}
By definition of $n_{p,\beta}$ (see (\ref{defnbeta})), 
\begin{equation}
\mathbb{E}\left[ \left\| Z_{n+1}-m \right\|^{2p}\right] \leq \frac{K_{p}'}{(n+1)^{p\alpha}} .
\end{equation}
In the same way, applying Lemma \ref{majznpp} and by induction, since $K_{p,\beta}' \geq K_{p}' \geq 1$, for all $n~\geq~n_{p,\beta}$,
\begin{align*}
\mathbb{E}\left[ \left\| Z_{n+1}-m \right\|^{2p+2}\right] & \leq \left( 1- \frac{2}{n}\right)^{p+1}\mathbb{E}\left[ \left\| Z_{n} - m \right\|^{2p+2}\right] +\frac{C'_1}{n^{(p+2)\alpha}} + C_{2}'\gamma_{n}^{2}\mathbb{E}\left[ \left\| Z_{n} - m \right\|^{2p}\right] \\
& \leq \left( 1-\frac{2}{n}\right)^{p+1}\frac{K_{p,\beta}'}{n^{p\beta}} + \frac{C'_{1}}{n^{(p+2)\alpha}} + C_{2}'\gamma_{n}^{2}\frac{K_{p}'}{n^{p\alpha}} \\
& \leq \left( 1- \frac{2}{n}\right)^{p+1}\frac{K_{p,\beta}'}{n^{p\beta}} + \frac{C_{1}'K_{p,\beta}'}{n^{(p+2)\alpha}} + C_{2}'\gamma_{n}^{2}\frac{K_{p,\beta}'}{n^{p\alpha}}.
\end{align*}
Factorizing by $\frac{K_{p,\beta}'}{(n+1)^{p\beta}}$,
\begin{align*}
\mathbb{E}\left[ \left\| Z_{n+1}-m \right\|^{2p+2}\right] & \leq \left( 1- \frac{2}{n}\right)^{p+1}\left( \frac{n+1}{n}\right)^{p\beta}\frac{K_{p,\beta}'}{(n+1)^{p\beta}} + C_{1}'\left( \frac{n+1}{n}\right)^{(p+2)\alpha}\frac{1}{(n+1)^{(p+2)\alpha -p\beta}}\frac{K_{p,\beta}'}{(n+1)^{p\beta}}\\
&  + C_{2}'c_{\gamma}^{2}\left( \frac{n+1}{n}\right)^{(p+2)\alpha}\frac{1}{(n+1)^{(p+2)\alpha -p\beta}}\frac{K_{p,\beta}'}{(n+1)^{p\beta}} \\
& \leq \left( \left( 1-\frac{2}{n}\right)^{p+1}\left( \frac{n+1}{n}\right)^{p\beta} + 2^{(p+2)\alpha}\frac{C_{1}' + C_{2}'c_{\gamma}^{2}}{(n+1)^{(p+2)\alpha - p\beta}}\right) \frac{K_{p,\beta}'}{(n+1)^{p\beta}}  .
\end{align*}
By definition of $n_{p,\beta}$,
\begin{equation}
\mathbb{E}\left[ \left\| Z_{n+1}-m \right\|^{2p+2}\right] \leq \frac{K_{p,\beta}'}{(n+1)^{p\beta}},
\end{equation}
which concludes the induction. In order to conclude the proof, we just have to take $K_{p} ~ \geq~ K_{p}',K_{p , \beta }  ~ \geq~ K_{p, \beta}'$, and
\begin{align*}
  & K_{p} \geq \max_{k < n_{p, \beta}}n^{k\alpha}\mathbb{E}\left[ \left\| Z_{k} - m \right\|^{2p}\right] , & K_{p,\beta} \geq \max_{k < n_{p, \beta}}n^{k\beta}\mathbb{E}\left[ \left\| Z_{k} - m \right\|^{2p+2}\right] .
\end{align*}
\end{proof}

\subsection{Proofs of Section \ref{sectionmoyvtlp}}

\begin{proof}[Proof of Lemma \ref{lemsum}]
For all integers $p \geq 1$ and $n \geq 1$, there are positive constants $c_{b}$, $ b \in \mathbb{N}^{n}$, such that for all non-negative real numbers $y_{k}$, $k=1,...,n$, 
\begin{equation}
\left( \sum_{k=1}^{n} y_{k} \right)^{p} = \sum_{ b=(b_{1},...,b_{n}) \in \mathbb{N}^{n} , b_{1}+b_{2}+...+b_{n}=p}c_{b}y_{1}^{b_{1}}...y_{n}^{b_{n}}.
\end{equation}
As a particular case, applying a classical generalization of Hölder's inequality (see \cite{smarandache1996collected}, page 179, for example) ,
\begin{align*}
\mathbb{E}\left[ \left\| \sum_{k=1}^{n} a_{k} Y_{k} \right\|^{p} \right] & \leq \mathbb{E}\left[ \left( \sum_{k=1}^{n} \left| a_{k} \right| \left\| Y_{k} \right\| \right)^{p} \right] \\
& = \sum_{ b=(b_{1},...,b_{n}) \in \mathbb{N}^{n},  b_{1}+b_{2}+...+b_{n}=p}c_{b}\left| a_{1}\right|^{b_{1}}...\left| a_{n}\right|^{b_{n}}\mathbb{E}\left[ \left\| Y_{1}\right\|^{b_{1}} ... \left\| Y_{n} \right\|^{b_{n}} \right] \\
& \leq \sum_{ b=(b_{1},...,b_{n}) \in \mathbb{N}^{n} , b_{1}+b_{2}+...+b_{n}=p}c_{b} \left| a_{1} \right|^{b_{1}} ... \left| a_{n} \right|^{b_{n}} \left(\mathbb{E}\left[ \left\| Y_{1}\right\|^{p}\right] \right)^{\frac{b_{1}}{p}}...\left(\mathbb{E}\left[ \left\| Y_{n}\right\|^{p}\right] \right)^{\frac{b_{n}}{p}} \\
& = \sum_{ b=(b_{1},...,b_{n}) \in \mathbb{N}^{n} , b_{1}+b_{2}+...+b_{n}=p}c_{b} \left( \left| a_{1}\right| \left( \mathbb{E}\left[ \left\| Y_{1} \right\|^{p} \right] \right)^{\frac{1}{p}}\right)^{b_{1}}...\left( \left| a_{n}\right| \left( \mathbb{E}\left[ \left\| Y_{n} \right\|^{p} \right] \right)^{\frac{1}{p}}\right)^{b_{n}} \\
& = \left( \sum_{k=1}^{n} \left| a_{k} \right| \left( \mathbb{E}\left[ \left\| Y_{k} \right\|^{p}\right]\right)^{\frac{1}{p}} \right)^{p} . 
\end{align*}
\end{proof}

The following lemma give the $L^{p}$ rates of convergence of the martingale term. Note that this is probably not a new result, but we were not able to find a proof in a published reference.
\begin{lem}\label{lemmart}
Let $\left( \xi_{n} \right)$ be a sequence of martingale differences taking values in a Hilbert space $H$ adapted to a filtration $\left( \mathcal{F}_{n} \right)$. Suppose that there is a non-negative constant $M$ such that for all $n \geq 1$, $\left\| \xi_{n} \right\| \leq M $ almost surely. Then, for all integer $p \geq 1$, there is a positive constant $C_{p}$ such that for all $n \geq 1$,
\[ 
\mathbb{E}\left[ \left\| \sum_{k=2}^{n} \xi_{k} \right\|^{2p} \right] \leq C_{p}n^{p}.
\]
\end{lem}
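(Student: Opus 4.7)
The plan is to argue by induction on $p$, using the martingale property to kill the highest--order cross term at each step and then absorbing all remaining contributions into a summable recursion. Set $S_n:=\sum_{k=2}^{n}\xi_{k}$ and $u_n^{(p)}:=\mathbb{E}[\|S_n\|^{2p}]$. For $p=1$ the orthogonality of martingale differences in $L^2(H)$ gives immediately $u_n^{(1)}=\sum_{k=2}^{n}\mathbb{E}\|\xi_k\|^2\leq M^2(n-1)$, which is the base case.

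For the inductive step, fix $p\geq 2$ and assume $u_n^{(q)}\leq C_q n^{q}$ for every $q<p$ and every $n\geq 1$. Writing $A:=\|S_{n-1}\|^{2}$, $B:=2\langle S_{n-1},\xi_n\rangle$, $C:=\|\xi_n\|^{2}$, the identity $\|S_n\|^{2}=A+B+C$ and the multinomial theorem yield
\[
\|S_n\|^{2p}=A^{p}+\sum_{\substack{j+k+l=p\\k+l\geq 1}}\frac{p!}{j!k!l!}\,A^{j}B^{k}C^{l}.
\]
I would take $\mathbb{E}[\,\cdot\,|\mathcal{F}_{n-1}]$ and observe the following: (i) $A$ is $\mathcal{F}_{n-1}$--measurable; (ii) when $(k,l)=(1,0)$ one has $\mathbb{E}[B|\mathcal{F}_{n-1}]=2\langle S_{n-1},\mathbb{E}[\xi_n|\mathcal{F}_{n-1}]\rangle=0$, so the sole term of $\|S_{n-1}\|$--degree $2p-1$ disappears; (iii) for every remaining triple the estimates
\[
|\mathbb{E}[B^{k}C^{l}\mid\mathcal{F}_{n-1}]|\leq 2^{k}M^{k+2l}\|S_{n-1}\|^{k}\qquad(k\geq 2)
\]
and
\[
|\mathbb{E}[B\,C^{l}\mid\mathcal{F}_{n-1}]|=2\bigl|\langle S_{n-1},\mathbb{E}[\xi_n\|\xi_n\|^{2l}|\mathcal{F}_{n-1}]\rangle\bigr|\leq 2M^{2l+1}\|S_{n-1}\|\qquad(k=1,\,l\geq 1)
\]
hold almost surely (using $\|\xi_n\|\leq M$ and Cauchy--Schwarz). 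Each surviving summand therefore contributes $\|S_{n-1}\|^{r}$ with $r=2j+k\leq 2p-2$.

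Taking full expectation gives an inequality of the form
\[
u_n^{(p)}\leq u_{n-1}^{(p)}+\sum_{r=0}^{2p-2}\kappa_{r}\,\mathbb{E}\bigl[\|S_{n-1}\|^{r}\bigr],
\]
with constants $\kappa_r$ depending only on $p$ and $M$. For odd $r=2m+1\leq 2p-2$, Cauchy--Schwarz plus the induction hypothesis yields $\mathbb{E}\|S_{n-1}\|^{r}\leq(u_{n-1}^{(m)}u_{n-1}^{(m+1)})^{1/2}\leq C\,n^{m+1/2}=Cn^{r/2}$; for even $r$ the induction hypothesis gives the same bound directly. Consequently
\[
u_n^{(p)}\leq u_{n-1}^{(p)}+C'_{p}\,n^{p-1},
\]
and telescoping from $n=2$ provides $u_n^{(p)}\leq C_p n^{p}$, closing the induction.

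The main obstacle, as I see it, is bookkeeping in the expansion: one has to verify that the only $\|S_{n-1}\|^{2p-1}$--term really does vanish (which forces a separate treatment of $k=1$, $l=0$ versus $k=1$, $l\geq 1$) and that every other summand is of $\|S_{n-1}\|$--degree at most $2p-2$ so the induction hypothesis can be applied. Once this degree count is established, no sharp constant tracking is needed, the rest being a standard summation of $\sum_{k\leq n}k^{p-1}=O(n^{p})$.
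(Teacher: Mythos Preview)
Your proof is correct and follows essentially the same route as the paper: strong induction on $p$, expansion of $\|S_n\|^{2p}$ from the recursion $\|S_n\|^2=\|S_{n-1}\|^2+2\langle S_{n-1},\xi_n\rangle+\|\xi_n\|^2$, vanishing of the linear cross term by the martingale property, bounding all remaining terms (of $\|S_{n-1}\|$--degree at most $2p-2$) via Cauchy--Schwarz and the induction hypothesis, and telescoping $u_n^{(p)}\le u_{n-1}^{(p)}+C_p' n^{p-1}$. The only cosmetic difference is that the paper first replaces $\|\xi_{n+1}\|^2$ by $M^2$ and then uses a binomial expansion in $(\|M_n\|^2+M^2)$ and $2\langle M_n,\xi_{n+1}\rangle$, which spares the separate treatment of the $(k,l)=(1,l)$, $l\ge 1$ case that you handle explicitly; the substance of the argument is the same.
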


\begin{proof}[Proof of Lemma A.2]
We prove Lemma A.2 with the help of a strong induction on $p\geq 1$. First, if $p=1$, since $\left( \xi_{n} \right)$ is a sequence of martingale adapted to a filtration $\left( \mathcal{F}_{n} \right)$, 
\begin{align*}
\mathbb{E}\left[ \left\| \sum_{k=2}^{n} \xi_{k} \right\|^{2} \right] &  = \sum_{k=2}^{n} \mathbb{E}\left[ \left\| \xi_{k}\right\|^{2} \right] + 2 \sum_{k=2}^{n}\sum_{k'=k}^{n} \mathbb{E}\left[ \left\langle \xi_{k} , \xi_{k'} \right\rangle \right] \\
& \leq (n-1)M^{2} + 2 \sum_{k=2}^{n} \sum_{k'=k}^{n} \mathbb{E}\left[ \left\langle \xi_{k}, \mathbb{E}\left[ \xi_{k'}| \mathcal{F}_{k'-1} \right] \right\rangle \right] \\
& = (n-1)M^{2}.
\end{align*}
Let $p \geq 2$ and for all $n \geq 2$, $M_{n} := \sum_{k=2}^{n}\xi_{k}$. We suppose from now that for all $k \leq p-1$, there is a positive constant $C_{k}$ such that for all $n \geq 2$,
\[
\mathbb{E}\left[ \left\| M_{n} \right\|^{2k} \right] \leq C_{k}(n-1)^{k} .
\]
For all $n \geq 2$,
\begin{align*}
\left\| M_{n+1} \right\|^{2} & = \left\| M_{n} \right\|^{2} + 2 \left\langle M_{n} , \xi_{n+1} \right\rangle + \left\| \xi_{n+1} \right\|^{2} \\
& \leq \left\| M_{n} \right\|^{2} + 2 \left\langle M_{n} , \xi_{n+1} \right\rangle + M^{2} .
\end{align*}
Thus,
\begin{align}\label{ineqmart1}
\notag \left\| M_{n+1}\right\|^{2p} & \leq \left( \left\| M_{n} \right\|^{2} + M^{2}\right)^{p} + 2 \left\langle M_{n} , \xi_{n+1} \right\rangle \left( \left\| M_{n} \right\|^{2} + M^{2} \right)^{p-1} \\
& + \sum_{k=2}^{p}\binom{p}{k}\left| 2 \left\langle M_{n} , \xi_{n+1} \right\rangle \right|^{k} \left( \left\| M_{n} \right\|^{2} + M^{2} \right)^{p-k}. 
\end{align}
We now bound the expectation of the three terms on the right-hand side of previous inequality. First, by induction,
\begin{align*}
\mathbb{E}\left[ \left( \left\| M_{n} \right\|^{2} + M^{2} \right)^{p}\right] & = \mathbb{E}\left[ \left\| M_{n} \right\|^{2p}\right] + \sum_{k=1}^{p}\binom{p}{k}M^{2k}\mathbb{E}\left[ \left\| M_{n} \right\|^{2p-2k}\right] \\
& \leq \mathbb{E}\left[ \left\| M_{n} \right\|^{2p} \right] + \sum_{k=1}^{p}\binom{p}{k}M^{2k}C_{p-k}n^{p-k} \\
& \leq \mathbb{E}\left[ \left\| M_{n} \right\|^{2p} \right] + O \left( n^{p-1} \right) .
\end{align*}
Moreover, since $\left( \xi_{n} \right)$ is a sequence of martingale differences adapted to a filtration $\left( \mathcal{F}_{n} \right)$, and since $M_{n}$ is $\mathcal{F}_{n}$-measurable,
\begin{equation}
\mathbb{E}\left[ \left\langle M_{n} , \xi_{n+1} \right\rangle \left( \left\| M_{n} \right\|^{2} + M^{2} \right)^{p-1} \right] = \mathbb{E}\left[ \left\langle M_{n} , \mathbb{E}\left[ \xi_{n+1} |\mathcal{F}_{n} \right] \right\rangle \left( \left\| M_{n} \right\|^{2} + M^{2} \right)^{p-1} \right] = 0 .
\end{equation}
Finally, applying Cauchy-Schwarz's inequality and Lemma A.1, since $\left\| \xi_{n} \right\| \leq M$, let
\begin{align*}
(*)& := \sum_{k=2}^{p}\binom{p}{k} \mathbb{E}\left[ \left| 2 \left\langle M_{n} , \xi_{n+1} \right\rangle \right|^{k} \left( \left\| M_{n} \right\|^{2} + M^{2}\right)^{p-k}\right] \\
& \leq \sum_{k=2}^{p}\binom{p}{k}2^{k}M^{k}\mathbb{E}\left[ \left\| M_{n} \right\|^{k} \left( \left\| M_{n} \right\|^{2} + M^{2}\right)^{p-k} \right] \\
& \leq \sum_{k=2}^{p}\binom{p}{k} 2^{p-1}M^{k} \left( \mathbb{E}\left[ \left\| M_{n} \right\|^{2p-k} \right] + M^{2p-2k}\mathbb{E}\left[ \left\| M_{n} \right\|^{k} \right] \right) .
\end{align*}
Applying Cauchy-Schwarz's inequality and by induction,
\begin{align}\label{ineqmart2}
\notag (*) & \leq \sum_{k=2}^{p} \binom{p}{k}2^{p-1}M^{k} \left( \sqrt{\mathbb{E}\left[ \left\| M_{n} \right\|^{2p-2} \right] \mathbb{E}\left[ \left\| M_{n} \right\|^{2(p+1-k)}\right]} + M^{2p-2k}\sqrt{\mathbb{E}\left[ \left\| M_{n} \right\|^{2}\right] \mathbb{E}\left[ \left\| M_{n} \right\|^{2k-2} \right]} \right) \\
\notag & \leq \sum_{k=2}^{p}\binom{p}{k}2^{p-1}M^{k} \left( \sqrt{C_{p-1}C_{p+1-k}}n^{p-k/2} + M^{2p-2k}\sqrt{C_{1}C_{k-1}}n^{k/2}\right) \\
& = O \left( n^{p-1} \right) ,
\end{align}
since $p \geq 2$. Thus, thanks to inequalities (\ref{ineqmart1}) to (\ref{ineqmart2}), there is a non-negative constant $A_{p}$ such that for all $n \geq 1$,
\begin{align*}
\mathbb{E}\left[ \left\| M_{n+1} \right\|^{2p} \right] & \leq \mathbb{E	}\left[ \left\| M_{n} \right\|^{2p} \right] + A_{p}n^{p-1} \\
& \leq \left\| \xi_{2} \right\|^{2p} + A_{p}\sum_{k=2}^{n} k^{p-1} \\
& \leq M^{2p} + A_{p}n^{p},
\end{align*} 
which concludes the induction and the proof.
\end{proof}

\begin{proof}[Proof of Theorem 4.2] Let us recall the following decomposition 
\begin{equation}
\label{decmoy} n\Gamma_{m}\left( \overline{Z}_{n} - m \right) = \frac{T_{1}}{\gamma_{1}} - \frac{T_{n+1}}{\gamma_{n}} + \sum_{k=2}^{n}T_{k}\left( \frac{1}{\gamma_{k}}- \frac{1}{\gamma_{k-1}} \right) + \sum_{k=1}^{n} \delta_{k} + \sum_{k=1}^{n} \xi_{k+1},
\end{equation}
with $T_{n} = Z_{n} - m$. Let $\lambda_{\min}>0$ be the smallest eigenvalue of $\Gamma_{m}$, we have with Lemma~A.1,
\begin{align*}
\mathbb{E}\left[ \left\| \overline{Z}_{n} - m \right\|^{2p}\right] & \leq \frac{5^{2p-1}}{\lambda_{\min}^{2p}n^{2p}} \mathbb{E}\left[ \left\| \frac{T_{1}}{\gamma_{1}}\right\|^{2p}\right] + \frac{5^{2p-1}}{\lambda_{\min}^{2p}n^{2p}}\mathbb{E}\left[ \left\| \frac{T_{n+1}}{\gamma_{n}}\right\|^{2p}\right] + \frac{5^{2p-1}}{\lambda_{\min}^{2p}n^{2p}} \mathbb{E}\left[ \left\| \sum_{k=2}^{n} T_{k}\left( \frac{1}{\gamma_{k}} - \frac{1}{\gamma_{k-1}}\right) \right\|^{2p}\right] \\
& + \frac{5^{2p-1}}{\lambda_{\min}^{2p }n^{2p}} \mathbb{E}\left[ \left\| \sum_{k=1}^{n} \delta_{k} \right\|^{2p}\right] + \frac{5^{2p-1}}{\lambda_{\min}^{2p}n^{2p}} \mathbb{E}\left[ \left\| \sum_{k=1}^{n} \xi_{k+1} \right\|^{2p}\right] .
\end{align*}
We now bound each term at the right-hand side of previous inequality. Since $Z_{1}$ is almost surely bounded, we have $\frac{1}{n^{2p}}\mathbb{E}\left[ \left\|  \frac{T_{1}}{\gamma_{1}}\right\|^{2p} \right] = O \left( \frac{1}{n^{2p}}\right)$. Moreover, with Theorem 4.1, 
\begin{align}\label{eqeq1}
\notag \frac{1}{n^{2p}}\mathbb{E}\left[ \frac{\left\| T_{n+1}\right\|^{2p}}{\gamma_{n}^{2p}}\right] & \leq  \frac{1}{c_{\gamma}^{2p}}\frac{1}{n^{2p -2p\alpha}}\frac{K_{1}}{(n+1)^{p\alpha}} \\
& = o \left( \frac{1}{n^{p}}\right) ,
\end{align}
since $\alpha < 1$. In the same way, since $\left| \frac{1}{\gamma_{k-1}}- \frac{1}{\gamma_{k}}\right| \leq 2\alpha c_{\alpha}^{-1}k^{\alpha -1}$, applying Lemma 4.3 and Theorem 4.1,
\begin{align}\label{eqeq2}
\notag \frac{1}{n^{2p}}\mathbb{E}\left[ \left\| \sum_{k=2}^{n} T_{k}\left( \frac{1}{\gamma_{k}} - \frac{1}{\gamma_{k-1}} \right) \right\|^{2p}\right] & \leq \frac{1}{n^{2p}}\left( \sum_{k=2}^{n} \left( \frac{1}{\gamma_{k}} - \frac{1}{\gamma_{k-1}} \right) \left(\mathbb{E}\left[ \left\| T_{k}\right\|^{2p}\right]\right)^{\frac{1}{2p}} \right)^{2p} \\
\notag & \leq \frac{1}{n^{2p}}\left( \sum_{k=2}^{n} 2\alpha c_{\gamma}^{-1}k^{\alpha -1} \left(\frac{K_{p}}{k^{\alpha p}}\right)^{\frac{1}{2p}} \right)^{2p} \\
\notag & \leq \frac{2^{2p} \alpha^{2p}c_{\gamma}^{-2p}K_{p}}{n^{2p}}\left( \sum_{k=2}^{n}\frac{1}{k^{1-\alpha /2}} \right)^{2p} \\
& = O \left( \frac{1}{n^{(2-\alpha ) p}}\right) .
\end{align}
Finally, since $\left\| \delta_{n} \right\| \leq C_{m}\left\| Z_{n} - m \right\|^{2}$, applying Lemma 4.3 and Theorem 4.1,
\begin{align}\label{eqeq3}
\notag \frac{1}{n^{2p}}\mathbb{E}\left[ \left\| \sum_{k=1}^{n} \delta_{k} \right\|^{2p}\right] & \leq \frac{1}{n^{2p}} \left( \sum_{k=1}^{n}\left( \mathbb{E}\left[ \left\| \delta_{k} \right\|^{2p}\right]\right)^{\frac{1}{2p}} \right)^{2p} \\
\notag &  \leq \frac{C_{m}^{2p}}{n^{2p}}\left( \sum_{k=1}^{n} \left( \mathbb{E}\left[ \left\| Z_{k}-m \right\|^{4p}\right]\right)^{\frac{1}{2p}} \right)^{2p} \\
\notag & \leq \frac{C_{m}^{2p}K_{2p}}{n^{2p}}\left( \sum_{k=1}^{n} \frac{1}{k^{\alpha}} \right)^{2p} \\
& = O \left( \frac{1}{n^{2\alpha p}} \right) .
\end{align}
Since $\alpha > 1/2$, we have $\frac{1}{n^{2p}}\mathbb{E}\left[ \left\| \sum_{k=1}^{n} \delta_{k} \right\|^{2p}\right] = o \left( \frac{1}{n^{p}}\right) $ . Finally, applying Lemma A.2, there is a positive constant $C_{p}$ such that for all $n \geq 1$,
\begin{align}
\notag  \frac{1}{n^{2p}}\mathbb{E}\left[ \left\| \sum_{k=1}^{n} \xi_{k+1} \right\|^{2p}\right] & \leq \frac{1}{n^{2p}}C_{p}(n+1)^{p} \\
\label{eqeq4} &  = O \left( \frac{1}{n^{p}}\right) .
\end{align}
We deduce from inequalities (\ref{eqeq1}) to (\ref{eqeq4}), that for all integer $p \geq 1$, there is a positive constant $A_{p}$ such that for all $n \geq 1$,
\begin{equation}
\mathbb{E}\left[ \left\| \overline{Z}_{n} -m \right\|^{2p}\right] \leq \frac{A_{p}}{n^{p}}.
\end{equation}
\end{proof}

\begin{proof}[Proof of Proposition 4.1]
We now give a lower bound of $\mathbb{E}\left[ \left\| \overline{Z}_{n} - m \right\|^{2}\right]$. One can check that $\frac{1}{n}\sum_{k=1}^{n} \xi_{k+1}$ is the dominant term in decomposition (\ref{decmoy}). Indeed, decomposition (\ref{decmoy}) can be written as
\begin{equation}
\Gamma_{m} \left( \overline{Z}_{n} - m \right) = \frac{1}{n}\sum_{k=1}^{n} \xi_{k+1} + \frac{1}{n}R_{n},
\end{equation}
with
\[
R_{n} := \frac{T_{1}}{\gamma_{1}} - \frac{T_{n+1}}{\gamma_{n}} + \sum_{k=2}^{n} T_{k} \left( \frac{1}{\gamma_{k}} - \frac{1}{\gamma_{k-1}}\right) + \sum_{k=1}^{n} \delta_{k} .
\]
Applying inequalities (\ref{eqeq1}), (\ref{eqeq2}) and (\ref{eqeq3}), one can check that
\begin{equation}
\frac{1}{n^{2}}\mathbb{E}\left[ \left\| R_{n} \right\|^{2}\right] = o \left( \frac{1}{n}\right) .
\end{equation}
Moreover,
\begin{equation}
\mathbb{E}\left[ \left\| \Gamma_{m}\left( \overline{Z}_{n} - m \right) \right\|^{2} \right] = \frac{1}{n^{2}}\mathbb{E}\left[ \left\| \sum_{k=1}^{n} \xi_{k+1} \right\|^{2} \right] + \frac{1}{n^{2}}\mathbb{E}\left[ \left\| R_{n} \right\|^{2}  \right] + \frac{2}{n^{2}} \mathbb{E}\left[ \left\langle \sum_{k=1}^{n} \xi_{k+1} , R_{n} \right\rangle \right] 
\end{equation}
Applying Cauchy-Schwarz's inequality and Lemma 4.3, there is a positive constant $C_{1}$ such that for all $n \geq 1$, 
\begin{align*}
\frac{2}{n^{2}}\mathbb{E} \left[ \left| \left\langle \sum_{k=1}^{n} \xi_{k+1} , R_{n} \right\rangle \right| \right] & \leq 2\mathbb{E}\left[ \frac{1}{n^{2}}\left\| \sum_{k=1}^{n} \xi_{k+1} \right\| \left\| R_{n} \right\| \right] \\
& \leq 2 \sqrt{ \frac{1}{n^{2}}\mathbb{E}\left[  \left\| \sum_{k=1}^{n} \xi_{k+1} \right\|^{2}\right]} \sqrt{\frac{1}{n^{2}}\mathbb{E}\left[ \left\| R_{n} \right\|^{2} \right]} \\
& \leq \frac{2\sqrt{C_{1}}}{\sqrt{n}} \sqrt{\frac{1}{n^{2}}\mathbb{E}\left[ \left\| R_{n} \right\|^{2} \right]} \\
& = o \left( \frac{1}{n}\right) .
\end{align*}

Moreover, since $\mathbb{E}\left[ \left\| \xi_{n+1}\right\|^{2}\right] = 1- \mathbb{E}\left[ \left\| \Phi (Z_{n}) \right\|^{2} \right] $ (see \cite{HC} for details), using the fact that $\left( \xi_{n} \right)$ is a sequence of martingale differences adapted to the filtration $\left(\mathcal{F}_{n} \right)$, we get
\begin{align*}
 \frac{1}{n^{2}}\mathbb{E}\left[ \left\| \sum_{k=1}^{n} \xi_{k+1} \right\|^{2}\right] & = \frac{1}{n^{2}}\sum_{k=1}^{n} \mathbb{E}\left[ \left\| \xi_{k+1} \right\|^{2}\right] + 2 \sum_{k=1}^{n} \sum_{k'=k+1}^{n} \mathbb{E}\left[ \left\langle \xi_{k+1}, \xi_{k'+1} \right\rangle \right]  \\
 & = \frac{1}{n} - \frac{1}{n^{2}} \sum_{k=1}^{n} \mathbb{E}\left[ \left\| \Phi (Z_{k} ) \right\|^{2}\right] + 2 \sum_{k=1}^{n} \sum_{k'=k+1}^{n}\mathbb{E}\left[ \left\langle \xi_{k+1} , \mathbb{E}\left[ \xi_{k'+1} |\mathcal{F}_{k'} \right] \right\rangle \right] \\  
& = \frac{1}{n} - \frac{1}{n^{2}} \sum_{k=1}^{n} \mathbb{E}\left[ \left\| \Phi (Z_{k} ) \right\|^{2}\right] .
\end{align*}
Moreover, since $\left\| \Phi (Z_{n}) \right\| \leq C \left\| Z_{n} - m \right\|$, applying Theorem 4.1, we have,
\begin{align*}
\frac{1}{n^{2}}\sum_{k=1}^{n} \mathbb{E}\left[ \left\| \Phi (Z_{k} ) \right\|^{2} \right] & \leq \frac{C^{2}}{n^{2}}\sum_{k=1}^{n}\mathbb{E}\left[ \left\| Z_{k} - m \right\|^{2}\right] \\
& \leq \frac{C^{2}K_{1}}{n^{2}}\sum_{k=1}^{n}\frac{1}{k^{\alpha}}\\
& = o \left( \frac{1}{n}\right) .
\end{align*}
Finally,
\begin{equation}
\mathbb{E}\left[ \left\| \Gamma_{m}\left( \overline{Z}_{n} - m \right) \right\|^{2}\right] = \frac{1}{n} + o \left(  \frac{1}{n}\right) .
\end{equation}
Thus, since the largest eigenvalue of $\Gamma_{m}$ satisfies $\lambda_{\max} \leq C$, there is a rank $n_{\alpha}$ such that for all $n \geq n_{\alpha}$,
\[
\mathbb{E}\left[ \left\| \overline{Z}_{n} -m\right\|^{2}\right] \geq \frac{1}{2C^{2}n}.
\]
Let $c'' := \min \left\lbrace \min_{1 \leq k\leq n_{\alpha}}\left\lbrace k\mathbb{E}\left[ \left\| \overline{Z}_{k} - m \right\|^{2}\right] \right\rbrace , \frac{1}{2C^{2}} \right\rbrace$, for all $n \geq 1$,
\begin{equation}
\mathbb{E}\left[ \left\| \overline{Z}_{n} - m \right\|^{2} \right] \geq \frac{c''}{n}.
\end{equation}

\end{proof}

\subsection{Proofs of Section 5}
\begin{proof}[Proof of Theorem 5.1] Let $\beta ' \in (1/2,1)$ such that $\beta ' < \alpha $. In order to apply Borel-Cantelli's Lemma, we will prove that
\begin{equation}
\sum_{n\geq 1} \mathbb{P}\left( \left\| Z_{n} - m \right\| \geq \frac{1}{n^{\beta ' /2}} \right) < \infty .
\end{equation}
Applying Theorem 4.1, for all $p \geq 1$, for all $n\geq 1$,
\begin{align*}
\mathbb{P}\left( \left\| Z_{n} - m \right\| \geq \frac{1}{n^{\beta ' /2}} \right) & \leq \mathbb{E}\left[ \left\| Z_{n} - m \right\|^{2p}\right]n^{p\beta '} \\
& \leq \frac{K_{p}}{n^{p ( \alpha - \beta ' )}} .
\end{align*}
Since $\beta ' < \alpha $, we can take $p > \frac{1}{\alpha - \beta '}$ and we get
\begin{align*}
\sum_{n\geq 1} \mathbb{P}\left( \left\| Z_{n} - m \right\| \geq \frac{1}{n^{\beta ' /2}} \right) \leq  \sum_{n\geq 1} \frac{K_{p}}{n^{p( \alpha - \beta ' ) }} < \infty .
\end{align*}
Applying Borel-Cantelli's Lemma, 
\begin{equation}
\left\| Z_{n} - m \right\| = O \left( n^{-\frac{\beta '}{2}} \right) \quad a.s,
\end{equation}
for all $\beta ' < \alpha$. In a particular case, for all $\beta < \alpha$,
\begin{equation}
\left\| Z_{n} - m \right\| = o \left( n^{-\frac{\beta}{2}}\right) \quad a.s.
\end{equation}
\end{proof}
\begin{proof}[Proof of Corollary 5.1]
Let us recall decomposition (\ref{decmoy}) of the averaged algorithm:
\begin{equation}\notag
\Gamma_{m}\left( \overline{Z}_{n} - m \right) = \frac{1}{n}\left( \frac{T_{1}}{\gamma_{1}}- \frac{T_{n+1}}{\gamma_{n}}+ \sum_{k=2}^{n}T_{k}\left( \frac{1}{\gamma_{k}}- \frac{1}{\gamma_{k-1}}\right) + \sum_{k=1}^{n} \delta_{k} + \sum_{k=1}^{n}\xi_{k+1} \right)  .
\end{equation}
We will give the almost sure rate of convergence of each term. First, since $Z_{1}$ is bounded, we have $\left\| \frac{T_{1}}{n\gamma_{1}} \right\| = O \left( \frac{1}{n}\right) $ almost surely. Applying Theorem 5.1, let $\beta' < \alpha$, 
\begin{align*}
\left\| \frac{T_{n+1}}{n\gamma_{n}} \right\| &  = o \left( \frac{n^{-\frac{\beta '}{2}}}{n^{1-\alpha}} \right)\quad a.s \\
& = o \left( \frac{1}{\sqrt{n}}\right) \quad a.s.
\end{align*}
Indeed, we obtain the last equality by taking $ \alpha > \beta ' > 2\alpha -1$, which is possible since $\alpha <1$. Moreover, since $\left| \gamma_{k}^{-1} - \gamma_{k-1}^{-1}\right|\leq 2\alpha c_{\gamma}^{-1}k^{\alpha -1}$, let $\beta ' < \alpha$, applying Theorem 5.1, 
\begin{align*}
\left\| \frac{1}{n}\sum_{k=2}^{n} T_{k} \left( \frac{1}{\gamma_{k}} - \frac{1}{\gamma_{k-1}}\right) \right\| & \leq \frac{1}{n}\sum_{k=1}^{n} \left\| T_{k} \right\| \left| \frac{1}{\gamma_{k-1}} - \frac{1}{\gamma_{k}}\right| \\
 & = o \left( \frac{1}{n}\sum_{k=2}^{n}k^{\alpha - \beta ' /2 -1} \right) \quad a.s \\
& = o \left( \frac{n^{\alpha - \beta ' /2 }}{n} \right) \quad a.s \\
& = o \left( \frac{1}{\sqrt{n}}\right) \quad a.s .
\end{align*}
Indeed, we get the last equality taking $\beta' > 2\alpha -1$. Moreover, since $\left\| \delta_{n} \right\| \leq C_{m} \left\| Z_{n} - m \right\|^{2}$, for all $\beta ' < \alpha $, 
\begin{align*}
\left\| \frac{1}{n}\sum_{k=1}^{n}\delta_{k}\right\|&  \leq \frac{1}{n}\sum_{k=1}^{n} \left\| \delta_{k} \right\| \\
& \leq  \frac{C_{m}}{n} \sum_{k=1}^{n}\left\| Z_{k} - m \right\|^{2} \\
& = o \left( \frac{1}{n}\sum_{k=1}^{n}\frac{1}{k^{\beta '}} \right) \quad a.s \\
& = o \left( \frac{1}{n^{\beta '}} \right) \quad a.s  \\
& = o \left( \frac{1}{\sqrt{n}}\right) \quad a.s.
\end{align*}
Indeed, we obtain the last equality by taking $\alpha > \beta ' > 1/2$. Finally, since $\mathbb{E}\left[ \left\| \sum_{k=1}^{n}\xi_{k+1}\right\|^{2}\right]~ = ~n ~+ ~o \left( n \right)$ (see \cite{CCG2015} and proof of Theorem 4.2), applying the law of large numbers for martingales (see Theorem 1.3.15 in \cite{Duf97}), for all $\delta > 0$, 
\begin{equation}\label{lgn}
\frac{1}{n}\sum_{k=1}^{n}\xi_{n+1} = o\left( \frac{\left(\ln n \right)^{\frac{1+\delta}{2}}}{\sqrt{n}}\right) \quad a.s ,
\end{equation}
which concludes the proof.
\end{proof}
\begin{rmq}
Note that the law of large numbers for martingales in \cite{Duf97} is not given for general Hilbert spaces. Nevertheless, in our context, this law of large numbers can be extended. We just have to prove that for all positive constant $\delta$, $U_{n}:= \frac{1}{\sqrt{n (\ln (n))^{1+\delta}}}\left\| \sum_{k=1}^{n}\xi_{k+1}\right\|$ converges almost surely to a finite random variable. Since $\left( \xi_{n}\right)$ is a sequence of martingale differences adapted to the filtration $\left( \mathcal{F}_{n} \right)$, and since $\mathbb{E}\left[ \left\| \xi_{n+1} \right\|^{2}\big| \mathcal{F}_{n} \right] \leq 1$,
\begin{align*}
\mathbb{E}\left[  U_{n+1}^{2} \big| \mathcal{F}_{n} \right] & = \frac{n (\ln (n))^{1+\delta}}{(n+1) (\ln (n+1))^{1+\delta}}U_{n}^{2} + \frac{1}{(n+1) (\ln (n+1))^{1+\delta}}\mathbb{E}\left[ \left\| \xi_{n+1} \right\|^{2}\big| \mathcal{F}_{n} \right] \\
& \leq U_{n}^{2} + \frac{1}{(n+1) (\ln (n+1))^{1+\delta}}.
\end{align*} 
Thus, applying Robbins-Siegmund Theorem (see \cite{Duf97}), $\left( U_{n}\right)$ converges almost surely to a finite random variable, which concludes the proof.
\end{rmq}

\end{appendix}
\bibliographystyle{apalike}
\bibliography{biblio_redaction}

\def\cprime{$'$}
\begin{thebibliography}{}

\bibitem[{A}rnaudon et~al., 2012]{ADPY10}
{A}rnaudon, M., {D}ombry, C., {P}han, A., and {Y}ang, L. (2012).
\newblock {S}tochastic algorithms for computing means of probability measures.
\newblock {\em Stochastic Processes and their Applications}, 122:1437--1455.

\bibitem[Bali et~al., 2011]{BBTW2011}
Bali, J.~L., Boente, G., Tyler, D.~E., and Wang, J.-L. (2011).
\newblock Robust functional principal components: a projection-pursuit
  approach.
\newblock {\em The Annals of Statistics}, 39(6):2852--2882.

\bibitem[Bartoli and Del~Moral, 2001]{bartoli}
Bartoli, N. and Del~Moral, P. (2001).
\newblock Simulation et algorithmes stochastiques.
\newblock {\em C{\'e}padu{\`e}s {\'e}ditions}.

\bibitem[Beck and Sabach, 2014]{beck2013weiszfeld}
Beck, A. and Sabach, S. (2014).
\newblock Weiszfeld's method: Old and new results.
\newblock {\em Journal of Optimization Theory and Applications}, to appear.

\bibitem[Benveniste et~al., 1990]{benveniste-book90}
Benveniste, A., M{\'e}tivier, M., and Priouret, P. (1990).
\newblock {\em Adaptive Algorithms and Stochastic Approximations}, volume~22 of
  {\em Applications of Mathematics}.
\newblock Springer-Verlag, New York.

\bibitem[Bongiorno et~al., 2014]{bongiorno2014contributions}
Bongiorno, E.~G., Salinelli, E., Goia, A., and Vieu, P. (2014).
\newblock {\em Contributions in infinite-dimensional statistics and related
  topics}.
\newblock Societ{\`a} Editrice Esculapio.

\bibitem[Cadre, 2001]{Cad01}
Cadre, B. (2001).
\newblock Convergent estimators for the {$L\sb 1$}-median of a {B}anach valued
  random variable.
\newblock {\em Statistics}, 35(4):509--521.

\bibitem[Cardot et~al., 2015]{CCG2015}
Cardot, H., C\'enac, P., and Godichon, A. (2015).
\newblock Online estimation of the geometric median in hilbert spaces: non
  asymptotic confidence balls.
\newblock Technical report, arXiv:1501.06930.

\bibitem[Cardot et~al., 2012]{CCM10}
Cardot, H., C{\'e}nac, P., and Monnez, J.-M. (2012).
\newblock A fast and recursive algorithm for clustering large datasets with
  k-medians.
\newblock {\em Computational Statistics \& Data Analysis}, 56(6):1434--1449.

\bibitem[Cardot et~al., 2013]{HC}
Cardot, H., C{\'e}nac, P., and Zitt, P.-A. (2013).
\newblock Efficient and fast estimation of the geometric median in {H}ilbert
  spaces with an averaged stochastic gradient algorithm.
\newblock {\em Bernoulli}, 19(1):18--43.

\bibitem[Chakraborty and Chaudhuri, 2014]{chakraborty2014spatial}
Chakraborty, A. and Chaudhuri, P. (2014).
\newblock The spatial distribution in infinite dimensional spaces and related
  quantiles and depths.
\newblock {\em The Annals of Statistics}, 42:1203--1231.

\bibitem[Chaudhuri, 1992]{Chaud92}
Chaudhuri, P. (1992).
\newblock Multivariate location estimation using extension of {$R$}-estimates
  through {$U$}-statistics type approach.
\newblock {\em Ann. Statist.}, 20:897--916.

\bibitem[Cuevas, 2014]{cuevas2014partial}
Cuevas, A. (2014).
\newblock A partial overview of the theory of statistics with functional data.
\newblock {\em Journal of Statistical Planning and Inference}, 147:1--23.

\bibitem[Duflo, 1997]{Duf97}
Duflo, M. (1997).
\newblock {\em Random iterative models}, volume~34 of {\em Applications of
  Mathematics (New York)}.
\newblock Springer-Verlag, Berlin.
\newblock Translated from the 1990 French original by Stephen S. Wilson and
  revised by the author.

\bibitem[Ferraty and Vieu, 2006]{ferraty2006nonparametric}
Ferraty, F. and Vieu, P. (2006).
\newblock {\em Nonparametric functional data analysis: theory and practice}.
\newblock Springer Science \& Business Media.

\bibitem[Gervini, 2008]{Ger08}
Gervini, D. (2008).
\newblock Robust functional estimation using the median and spherical principal
  components.
\newblock {\em Biometrika}, 95(3):587--600.

\bibitem[Haldane, 1948]{Hal48}
Haldane, J. B.~S. (1948).
\newblock {Note on the median of a multivariate distribution}.
\newblock {\em Biometrika}, 35(3-4):414--417.

\bibitem[Kemperman, 1987]{Kem87}
Kemperman, J. H.~B. (1987).
\newblock The median of a finite measure on a {B}anach space.
\newblock In {\em Statistical data analysis based on the {$L\sb 1$}-norm and
  related methods ({N}euch\^atel, 1987)}, pages 217--230. North-Holland,
  Amsterdam.

\bibitem[Kuhn, 1973]{kuhn1973note}
Kuhn, H.~W. (1973).
\newblock A note on {F}ermat's problem.
\newblock {\em Mathematical programming}, 4(1):98--107.

\bibitem[Kushner and Yin, 2003]{kushner2003stochastic}
Kushner, H.~J. and Yin, G. (2003).
\newblock {\em Stochastic approximation and recursive algorithms and
  applications}, volume~35.
\newblock Springer.

\bibitem[Minsker, 2014]{minsker2013geometric}
Minsker, S. (2014).
\newblock Geometric median and robust estimation in {B}anach spaces.
\newblock {\em Bernoulli, to appear}.

\bibitem[M\"ott\"onen et~al., 2010]{MNO2010}
M\"ott\"onen, J., Nordhausen, K., and Oja, H. (2010).
\newblock Asymptotic theory of the spatial median.
\newblock In {\em Nonparametrics and Robustness in Modern Statistical Inference
  and Time Series Analysis: A Festschrift in honor of Professor Jana
  Jure{\u{c}}kov{\'{a}}}, volume~7, pages 182--193. IMS Collection.

\bibitem[Pelletier, 2000]{Pel00}
Pelletier, M. (2000).
\newblock Asymptotic almost sure efficiency of averaged stochastic algorithms.
\newblock {\em SIAM J. Control Optim.}, 39(1):49--72.

\bibitem[Petrov, 1995]{petrov1995limit}
Petrov, V.~V. (1995).
\newblock Limit theorems of probability theory. sequences of independent random
  variables, vol. 4 of.
\newblock {\em Oxford Studies in Probability}.

\bibitem[Polyak and Juditsky, 1992]{PolyakJud92}
Polyak, B. and Juditsky, A. (1992).
\newblock Acceleration of stochastic approximation.
\newblock {\em SIAM J. Control and Optimization}, 30:838--855.

\bibitem[Robbins and Monro, 1951]{robbins1951stochastic}
Robbins, H. and Monro, S. (1951).
\newblock A stochastic approximation method.
\newblock {\em The annals of mathematical statistics}, pages 400--407.

\bibitem[Silverman and Ramsay, 2005]{silverman2005functional}
Silverman, B. and Ramsay, J. (2005).
\newblock {\em Functional Data Analysis}.
\newblock Springer.

\bibitem[Smarandache, 1996]{smarandache1996collected}
Smarandache, F. (1996).
\newblock {\em Collected Papers, Vol. I}, volume~1.
\newblock Infinite Study.

\bibitem[Vardi and Zhang, 2000]{VZ00}
Vardi, Y. and Zhang, C.-H. (2000).
\newblock The multivariate {$L\sb 1$}-median and associated data depth.
\newblock {\em Proc. Natl. Acad. Sci. USA}, 97(4):1423--1426.

\bibitem[Weber, 1929]{weber1929alfred}
Weber, A. (1929).
\newblock Uber den standort der industrien (alfred weber's theory of the
  location of industries).
\newblock {\em University of Chicago}.

\bibitem[Yang, 2010]{yang2010riemannian}
Yang, L. (2010).
\newblock Riemannian median and its estimation.
\newblock {\em LMS Journal of Computation and Mathematics}, 13:461--479.

\end{thebibliography}

\end{document}